\newtheorem{lemma}{Lemma}
\newtheorem{corollary}{Corollary}
\newtheorem{theo}{Theorem}
\newcommand{\comment}[1]{}  
\begin{document}
%
\title{On optimal coordinated dispatch for heterogeneous storage fleets with partial availability}
%
%
%

\author{David~Angeli,~\IEEEmembership{Fellow, IEEE},
        Zihang~Dong,~and~Goran~Strbac,~\IEEEmembership{Member, IEEE}

\thanks{
The authors are with the Department of Electrical and Electronic Engineering, Imperial College London, London, SW7 2AZ, UK. {\textit{ (e-mail:  d.angeli@imperial.ac.uk;
zihang.dong14@imperial.ac.uk; g.strbac@imperial.ac.uk)}}}%
\thanks{David Angeli is also with the Dipartimento di Ingegneria dell'Informazione, Universit\`a di Firenze, Italy}
}

%
%

\markboth{Journal of \LaTeX\ Class Files,~Vol.~14, No.~8, August~2015}%
{Shell \MakeLowercase{\textit{et al.}}: Bare Demo of IEEEtran.cls for IEEE Journals}
%



\maketitle

\begin{abstract}
This paper addresses the problem of optimal scheduling of an aggregated power profile (during a coordinated discharging or charging operation) by means of a heterogeneous fleet of storage devices subject to availability constraints. Devices have heterogeneous initial levels of energy,  power ratings and efficiency; moreover, the fleet operates without cross-charging of the units. An explicit feedback policy is proposed to compute a feasible schedule whenever one exist and scalable design procedures to achieve maximum time to failure or minimal unserved energy in the case of unfeasible aggregated demand profiles. Finally, a time-domain characterization of the set of feasible demand profiles using aggregate constraints is proposed, suitable for optimization problems where the aggregate population behaviour is of interest. 
\end{abstract}

\begin{IEEEkeywords}
Optimal storage management, Storage fleet aggregation, Flexible demand, Distributed optimal control.
\end{IEEEkeywords}

%
\IEEEpeerreviewmaketitle

\section{Introduction}
%
%
%
%
\IEEEPARstart{P}{ower} networks, in coming years, are likely to significantly rely on distributed storage assets in order to ease up the task of balancing demand and supply, both during normal operation or in case of power outages. This is expected, on one hand, because of increased penetration of renewable technologies and the volatility of supply it entails; on the other, because of widespread decarbonisation of the transport sector (and consequent adoption of electric vehicles). The flexibility afforded by a considerable amount of storage capacity connected permanently or intermittently to the network has a big potential for limiting peak demands and related costs, and delivering balancing services to the grid. Recent research has systematically classified a significant amount of literature dealing with optimal energy management for storage devices \cite{alinia2020online, Ma2011decentralized, Xin2013a}. Specifically, the work \cite{alinia2020online} introduces an adaptive charging algorithm with the objective of peak‐load management. Reference \cite{Ma2011decentralized} proposes a decentralized strategy for numerous identical electric vehicles (EVs) under the non-cooperative games to minimize the charging cost. Cooperative control of network control theory is developed in \cite{Xin2013a} to ensure the satisfaction of both energy balance and fair utilization among dispersed energy storage systems.

Under such future scenarios, one issue becomes fundamental; how to schedule power profiles of a multitude of storage devices while respecting individual rated power and energy constraints and, at the same time, fulfilling a preassigned aggregate power profile for the fleet. This question arises both during a hypothetical ``discharging operation'', viz. when the fleet is acting as a ``service provider'' and compensates for lack of power due to outages or fluctuations in availability of renewable generation, or during the ``charging phase'', when storage devices are recharged to meet individual energy needs and the aggregate profile is designed so as to possibly reduce peaks in demand or to minimize conventional generation costs.

Such questions have recently received considerable attention from the community (see \cite{weitzel} and references therein for a recent survey and reasoned literature classification based on keywords such as energy storage and optimal policy/strategy/management).
For instance, \cite{dynprog} presents an application of dynamic programming in the estimation of the capacity value of storage devices. 
A dynamic model to approximate the power/energy capacity of aggregations of electric vehicles is developed in \cite{Liu2013planning}. The aggregate flexibility provided by a collection of thermostatically controlled loads (TCLs) is characterized by \cite{Hao2014aggregate} using a stochastic battery model. The authors of \cite{Fortenbacher2015optimal} adopt Model Predictive control (MPC) to control multiple battery sets to track an aggregated set-point trajectory while minimizing battery degradation, battery system and network losses. Reference \cite{Dall2017optimal} proposes an algorithmic framework which controls the dispatchable distributed energy resources (DERs) to the power demand request from transmission system at the feeder substation. Demand dispatch for regulation of the power grid is considered in \cite{Chen2016state}, based on randomized local control algorithms for homogeneous load in a mean field control setting. 
More recently, the control architectures for a fleet of diverse DERs using the packetized energy management dispatch paradigm has been developed in \cite{Espinosa2020a}. Notions of controlled invariant sets have been proposed in \cite{appino}, to achieve optimality preserving aggregation of fleets. In \cite{Jang2021large}, invariant sets are also used to ensure safe coordination of systems with both local and global constraints while a population of homogeneous air conditioners tracks a power trajectory. 
\cite{zhuzhang} introduces an optimal coordination policy for fleets seeking to fulfill a preassigned reference signal (subject to penalty costs for unmet demand) and achieving profit maximization taking into account the service and recharging phases. 
Our approach complements the existing literature in two respects: i) it considers fully heterogeneous fleets, allowing different power ratings, different initial and target energy levels, different non-unity efficiency and different availability windows; ii) it provides guaranteed and scalable optimal solutions which may help an aggregator monitor its flexibility provision in unidirectional power transfer operations, viz. neglecting the recovery phase, by computing exact optimal dispatch profiles in real time.

In this context, we adopt and further develop the approach proposed in \cite{ETA-pscc,ETA-tops,EAST,zachary,ETA-tosg}. In \cite{ETA-pscc} an optimal causal dispatch policy is introduced, for heterogeneous storage fleets 
unable to cross-charge, and seeking to deliver a pre-assigned aggregate demand profile, while maximizing future flexibility, viz. the ability of meeting future power requests. Remarkably, the same policy was first introduced in \cite{nash}, in the context of hydro-reservoirs.
Further optimality properties, including the ability to minimize \emph{time-to-failure} or \emph{unserved energy}  were highlighted in subsequent publications \cite{ETA-tops,ETA-tosg}, while an explicit and remarkably effective time-domain characterization of the set of feasible power profile demands is provided in \cite{zachary}, using the notion of \emph{load duration curves}. An in depth compendium of the theory, with new insights and subsequent interpretations, can be found in \cite{phtr}. 
While such theory has been developed taking into account heterogeneous fleets of devices, it neglects the significant possibility of \emph{partial availability}, viz. the fact that different devices may be connected to the grid during differing time intervals over the considered prediction horizon, as would typically be expected of i.e. \emph{electric vehicles}.

The present paper extends the approach of \cite{phtr} to deal with the case of storage devices with different \emph{availability windows} (or, more generally, availability sets), heterogeneous power ratings and initial energy values. Its contribution is manifold: 
\begin{itemize}
    \item On one hand, it shows how to convert the dispatch design problem for a fleet subject to availability constraints (under a no cross-charging assumption) into the dispatch design of an augmented demand signal for a similar fleet \emph{without} availability constraints and with possible complemented initial energy levels; hence, it broadens applicability of the previous results to the current set-up.
    \item it shows by means of an example, that no single causal policy exists in this case and provides a characterization of the set of aggregate power profiles that a heterogeneous fleet of given initial energy, power ratings and availability sets is able to deliver. Counter-examples show why simpler necessary conditions are unable to capture the full complexity of the feasible set of power profiles.
    \item It highlights how to leverage the new proposed policies in order to derive optimal power schedules with minimum unserved energy or maximum time-to-failure.
\end{itemize}    
    The analysis complements preliminary statements submitted in the conference paper \cite{cdcbatteries} in several directions: by providing detailed proofs in support of the statement of the result in Section \ref{first}, by developing a characterization of the set of feasible power profiles, by proposing results on optimal dispatch policies achieving minimum unserved energy or maximum time-to-failure and by considering novel medium and large scale examples where such approaches are illustrated. 

\section{Problem formulation and preliminary results}
\label{first}

\subsection{System description and objectives}
Let $\mathcal{N}$ denote a finite collection of batteries (of cardinality $N$). Each battery $i \in \mathcal{N}$ is constrained by a rated power
$\bar{P}_i$, quantifying its maximum discharging rate.
Our aim is to analyze and design dispatch policies for the fleet over a given bounded time interval $\mathcal{T} \subset [0, + \infty)$.
To this end, let $E_i(t)$ denote the state of charge of the $i$-th device at time $t \in \mathcal{T}$, viz. the amount of externally measured energy stored in the device. Notice that measuring energy externally allows to factor in possibly heterogeneous and non-unity efficiencies, in particular, by defining
$E_i(t) = \eta_i \tilde{E}_i(t)$ where $\tilde{E}_i$ denotes internally measured energy and $\eta_i$ the round-trip efficiency coefficient.
The differential equations given below describe the time-evolution of $E_i(t)$:
\begin{equation}
\label{popequation}
\dot{E}_i (t) = - u_i(t)
\end{equation}
with initial configuration of energy levels, $E_i(0)$, $i \in \mathcal{N}$. The variable $u_i(t)$ is the instantaneous power delivered by the $i$-th battery, which needs to fulfill:
\begin{equation}
 \label{pointwise}
 u_i(t) \in [0, \bar{P}_i], \quad \forall \, t \in \mathcal{T}.
 \end{equation}
Together with constraints (\ref{pointwise}), we consider the additional possibility of devices operating within a pre-assigned availability window  (or more in general availability set) $\mathcal{A}_i \subset \mathcal{T}$, so that:
\begin{equation}
\label{window}
u_i(t) = 0, \quad \forall \, t \in \mathcal{T} \backslash \mathcal{A}_i. 
\end{equation} 
Our first objective is to ascertain, for any given power profile $d: \mathcal{T} \rightarrow [0, + \infty)$,
if there exists a control action $u_i(\cdot)$, $i \in \mathcal{N}$, fulfilling constraints
(\ref{pointwise}) and (\ref{window}), while at the same time
\begin{equation}
\label{totalpower}
\sum_{i \in \mathcal{N}} u_i (t) = d(t), \quad \forall \, t \in \mathcal{T},
\end{equation}
and the associated solution of (\ref{popequation}) fulfills $E_i (t) \geq 0,\, \forall \, t \in \mathcal{T}$.

For ease of notation, we arrange energy values in a vector $E(t):= [ E_1(t), E_2 (t), \ldots, E_N (t) ]'$. Moreover, for any given $E(0)$ we define the set of feasible power profiles:
\begin{equation}\label{feset}
\begin{array}{rcl}
\mathcal{F} ( E(0) ) &:=& \Big \{ d(\cdot): \mathcal{T} \rightarrow [0,+ \infty): \\
& &\exists \{ u_i : \mathcal{T} \rightarrow [0,\bar{P}_i] \}_{i \in \mathcal{N}}: \\
& & u_i(t) = 0, \; \forall \, t \notin \mathcal{A}_i, \forall \, i \in \mathcal{N}\\
& &E_i(0) \geq \int_\mathcal{T} u_i(\tau) d \tau,  \, \forall \, i \in \mathcal{N}  \\
& & d(t) = \sum_{i \in \mathcal{N}} u_i(t), \forall \, t \in \mathcal{T} \Big \}.
\end{array}
\end{equation}
Hence, our preliminary task is to find out if $d(\cdot) \in \mathcal{F} (E(0))$, and if so, what is a suitable dispatch policy.
In practice, this question arises whenever a fleet of storage devices are required to coordinate in delivering energy (without cross-charging) to jointly fulfil a given power reference signal $d(t)$.
Previous literature, \cite{ETA-pscc,ETA-tosg}, has answered such questions for the case of full availability, viz. windows coinciding with $\mathcal{T}$.
We approach the problem by suitably modifying the scheduling proposed 
in \cite{ETA-pscc} (later denoted Greedy Greatest Discharge Duration First policy) to address the issue of availability windows. 

Our main contribution is a constructive design algorithm for a feasible policy and a supporting theoretical analysis, showing that the problem can be equivalently framed as one of delivery of an auxiliary (increased) power profile for a fleet with suitably augmented initial energy levels, identical power ratings and full availability over the considered interval.  

As in \cite{ETA-pscc}, we introduce a new set of coordinates, 
the so called \emph{time-to-discharge} variables, defined as:
\begin{equation}
\label{timetogodiff}
x_i (t) = E_i (t) / \bar{P}_i.
\end{equation}
Accordingly, the state evolution is governed by:
\begin{equation}
 \dot{x}_i (t) = - u_i (t)/ \bar{P}_i, \quad x_i (0) = E_i (0) / \bar{P}_i.
\end{equation}
Some additional notations are useful to formulate the proposed feedback policy. More closely, for any discharge duration
$\tau$, we denote the set of agents $\mathcal{N}_\tau (x) := \{ i \in \mathcal{N}: x_i = \tau \}$, where $x \in \mathbb{R}^N$ is the stacked state vector of time-to-discharge over all devices.
Clearly, $\mathcal{N} = \bigcup_{\tau \geq 0} \mathcal{N}_{\tau} (x)$, and this partitions $\mathcal{N}$ as $\mathcal{N}_{\tau_1} \cap \mathcal{N}_{\tau_2} = \emptyset$ for $\tau_1 \neq \tau_2$.
Overall, only a finite number of $\mathcal{N}_{\tau}$ are non-empty, at each time $t$, and we order the corresponding discharge time as, $\tau_1 (t)> \tau_2(t) > \tau_3(t) > \ldots > \tau_{G(t)} (t)$, with $G(t) \leq N$. 

\subsection{Greedy-Greatest-Discharge-Duration First policy}

It is useful to first introduce the Greedy-Greatest-Discharge-Duration First (GGDDF) policy without any reference to availability sets:
\begin{equation}
\label{feedback2}
u_i(t) = \left \{ \begin{array}{rl} \bar{P}_i & \textrm{if } i \in \mathcal{N}_{\tau_k} \textrm{ and }\\& \sum_{h \leq k} \sum_{j \in \mathcal{N}_{\tau_h}} \bar{P}_j
\leq d(t) \\
\tilde{r}(t) \bar{P}_i & \textrm{if }  i \in \mathcal{N}_{\tau_k} \textrm{ and }\\& \sum_{h < k} \sum_{j \in \mathcal{N}_{\tau_h}} \bar{P}_j
\leq d(t)\\& d(t) <  \sum_{h \leq k} \sum_{j \in \mathcal{N}_{\tau_h} } \bar{P}_j \\
0 & \textrm{otherwise} \end{array} \right .
\end{equation}
where the value of $\tilde{r}(t) \in [0,1]$ is determined according to:
\[    \tilde{r}(t) = \frac{ d(t) -  \sum_{h < k} \sum_{j \in \mathcal{N}_{\tau_h} \bar{P}_j }}{ \sum_{j \in \mathcal{N}_{\tau_k} } \bar{P}_j } \]
where $k$ is such that 
\[ \sum_{h < k} \sum_{j \in \mathcal{N}_{\tau_h}} \bar{P}_j
\leq d(t) <  \sum_{h \leq k} \sum_{j \in \mathcal{N}_{\tau_h}} \bar{P}_j. \]
Accordingly, the amount of power extracted from all devices (regardless of availability) equals the instantaneous demand $d(t)$, viz.
\begin{equation}\label{poweraggregation2}
\sum_{i \in \mathcal{N}} u_i (t) = d(t),
\end{equation}
For later use, we denote this feedback policy explicitly as:
\begin{equation}
\label{policy2}
u(t) = \tilde{K} (x(t),d(t)),
\end{equation}
with the associated system of equations:
\begin{equation}
\label{feedbackequation2}
\dot{x} (t) = - \Omega \tilde{K} (x(t),d(t) ).
\end{equation}
where $\Omega = \textrm{diag} [\bar{P}_1^{-1}, \bar{P}_2^{-1},\ldots, \bar{P}_N^{-1} ]$. The solution of (\ref{feedbackequation2}) is denoted as $\tilde{\varphi} (t, x, d(\cdot))$.

While taking into account the availability sets, the explicit feedback policy to dispatch $d(t)$ at time $t$ is expressed as
\begin{equation}
\label{feedback}
u_i(t) = \left \{ \begin{array}{rl} \bar{P}_i & \textrm{if } i \in \mathcal{N}_{\tau_k} \textrm{ and } \\ & \sum_{h \leq k} \sum_{j \in \mathcal{N}_{\tau_h}: t \in \mathcal{A}_j} \bar{P}_j
\leq d(t) \\
r(t) \bar{P}_i & \textrm{if }  i \in \mathcal{N}_{\tau_k} \textrm{ and}  \\
& \sum_{h < k} \sum_{j \in \mathcal{N}_{\tau_h}: t \in \mathcal{A}_j} \bar{P}_j
\leq d(t) \\ & d(t) <  \sum_{h \leq k} \sum_{j \in \mathcal{N}_{\tau_h}: t \in \mathcal{A}_j} \bar{P}_j \\
0 & \textrm{otherwise} \end{array} \right .
\end{equation}
where the value of $r(t) \in [0,1]$ is determined according to:
\[    r(t) = \frac{ d(t) -  \sum_{h < k} \sum_{j \in \mathcal{N}_{\tau_h}: t \in \mathcal{A}_j} \bar{P}_j }{ \sum_{j \in \mathcal{N}_{\tau_k}: t \in \mathcal{A}_j } \bar{P}_j } \]
where $k$ is such that 
\[ \sum_{h < k} \sum_{j \in \mathcal{N}_{\tau_h}: t \in \mathcal{A}_j} \bar{P}_j
\leq d(t) <  \sum_{h \leq k} \sum_{j \in \mathcal{N}_{\tau_h}: t \in \mathcal{A}_j} \bar{P}_j. \]	
In this case, it is remarked that:
\begin{equation}\label{poweraggregation}
\sum_{i \in \mathcal{N} : t \in \mathcal{A}_i} u_i (t) = d(t),
\end{equation}
viz. the amount of power extracted from the available devices equals the instantaneous demand $d(t)$. Moreover, this feedback policy is explicitly denoted as:
\begin{equation}
\label{policy}
u(t) = K (t, x(t),d(t)),
\end{equation}
which is referred as the GGDDF policy with availability sets.
We remark that time-dependence is made explicit in (\ref{policy}) since the availability of devices at any time $t$ directly influence the amount of power extracted from each battery.
With this in mind, the fleet's closed-loop equations read:
\begin{equation}
\label{feedbackequation}
\dot{x} (t) = - \Omega K (t,x(t),d(t) ).
\end{equation}
Of course, the schedule (\ref{policy}) might induce solutions
of (\ref{feedbackequation}) violating the constraint that $x_i(t) \geq 0$ due to insufficient initial energy; we ignore this, for the time being, as our main goal is to compute how much energy the feedback policy extracts outside of the availability window for the given power profile $d(t)$.
Notice also that the constraint $u_i(t)=0$ for $t  \notin \mathcal{A}_i$ is, for the time-being, not enforced. Specifically if a device has a sufficiently high time-to-discharge, it will be discharged by (\ref{policy}) regardless of its availability window.
Denote by
$\varphi(t, x(0), d(\cdot))$ the solution of  (\ref{feedbackequation}) at time $t$ with initial condition $x(0)$.
Following similar steps as in \cite{michaelphd} one can show the properties stated below:
\begin{enumerate}
\item For all initial conditions $x(0) \in \mathbb{R}^N$ there exists a unique Filippov solution (see \cite{Filippov}) of \eqref{feedbackequation} defined for $t \geq 0$.
\item The relative ordering of time-to-discharge is preserved along solutions. Namely, for all $t \geq \tau \geq 0$, it holds
\[ x_i(\tau) \geq x_j(\tau) \Rightarrow \varphi_i (t, x(0), d(\cdot)) \geq \varphi_j (t, x(0), d(\cdot)) \]
\item A corollary of the previous implication is:
	\[ x_i(\tau) = x_j(\tau) \Rightarrow \varphi_i (t, x(0), d(\cdot)) = \varphi_j (t, x(0), d(\cdot))\]
\item Furthermore, sets of agents with the same discharge duration are monotonically non-decreasing in time (and in fact strictly increasing whenever two or more sets merge with each other as the corresponding discharge time equalize)
\end{enumerate}	
Additional properties will be discussed later as they are instrumental to proving existence of a feasible dispatch policy resulting from (\ref{feedback}) subject to a suitable choice of $x(0)$.

\subsection{Equivalent feasible power dispatching}
Let us introduce a vector 
$\lambda = [ \lambda_i ]_{i \in \mathcal{N}} \in [0,1]^N$ which we use
to define the auxiliary time-to-discharge vector $\tilde{x}_i = x_i + \lambda_i \mu ( \mathcal{T} \backslash \mathcal{A}_i )$, where $\mu$ denotes the Lebesgue measure in $\mathbb{R}$.
Informally $\lambda_i$ modulates between a minimum value of $0$ (no energy) and $1$ (energy corresponding to discharging at full rate)
the auxiliary energy needed by agent $i$ to account for discharging happening outside of its availability window.
For each $\tilde{x}(0)$ we define the corresponding solution according to our feedback policy, viz. $\tilde{x} (t) := \varphi (t, \tilde{x} (0) , d(\cdot)).$
Then, for each battery $i \in \mathcal{N}_i$, we integrate the amount of energy delivered outside of its availability set:
\begin{equation}
\label{outsideenergy}
 \Delta_i = \int_{\mathcal{T} \backslash \mathcal{A}_i} K_i (t, \varphi(t, \tilde{x} (0), d(\cdot)), d(t) ) \, d t  
\end{equation}  
Notice that $\Delta_i$ is a function of $\lambda$, as $\tilde{x}(0)$ is such. In particular, we focus on the map:
\begin{equation}
\label{Lambdamap}
  \Lambda( \lambda ) := [ \Delta_i / (\bar{P}_i \cdot \mu(\mathcal{T} \backslash \mathcal{A}_i) )]_{i \in \mathcal{N}}. 
\end{equation}
The following properties of $\varphi$ and $\Lambda$ were stated in
\cite{cdcbatteries} without proof:
\begin{enumerate}
\item $\Lambda ( [0,1]^N ) \subset [0,1]^N$
\item $\Lambda: [0,1]^N \rightarrow [0,1]^N$ is a continuous function
\item $\varphi(t,x(0), d(\cdot))$ is a cooperative (monotone) system \cite{monotoneangeli}: $x_1 \succeq x_2 \Rightarrow \varphi (t,x_1, d(\cdot) ) \succeq \varphi(t,x_2, d(\cdot)  ), \; \forall \, t \geq 0$, where $x_1$ and $x_{2}$ are temporarily used to denote two different vectors of time-to-discharge, $\succeq$ component-wise inequalities between vectors.
\item $\varphi$ fulfils translation invariance (see \cite{translationinvariance}), viz.
$ \varphi(t,x(0) + \delta \textbf{1}, d(\cdot ) ) = \varphi(t,x(0), d(\cdot) ) + \delta \textbf{1}$, where $\delta \in \mathbb{R}$ is arbitrary and $\textbf{1}$ denotes the vector of all ones of dimension $N$.
\item The flow is weakly contracting with respect to the infinity norm:
$ \| \varphi(t,x_1,d(\cdot)) - \varphi(t,x_2,d(\cdot)) \|_{\infty}
\leq \| x_1 - x_2 \|_{\infty}$.
\end{enumerate}

\begin{proof}
i) \textit{Monotonicity of $\varphi$}: Notice that, the feedback $K$ is such that
$K_i (t,x,d)$ is non-increasing with respect to $x_j$ for all $j \neq i$
(increasing $x_j$ might trigger an increase in its priority so that more energy will be taken from $x_j$ and, consequently, possibly less will be extracted from $x_i$).
Moreover, while $K$ is discontinuous (in fact piece-wise constant in $x$), similar steps as in Proof of Lemma 3.2.1 in \cite{michaelphd}, show that there exists  a unique (Filippov's or Caratheodory) solution that fulfils $\dot{x}(t) = - \Omega K (t,x(t),d(t))$ for almost all $t$.
By uniqueness of solutions, combined with non-decreasingness of $\dot{x}_i$ with respect to $x_j$ for all $i \neq j$, we conclude that system (\ref{feedbackequation})
is cooperative (\cite{monotoneangeli}). Hence, denoting by $\succeq$ componentwise inequalities between vectors, we see that 
$x_1 \succeq x_2 \Rightarrow \varphi(t,x_1,d(\cdot)) \succeq \varphi(t,x_2,d(\cdot)),\, \forall \, t \geq 0$.

ii) \textit{Translation invariance}: Let $\textbf{1}$ denote the vector of all $1$s of dimension $N$.
For all $x \in \mathbb{R}^N$, all $d \geq 0$ and any $\delta \in \mathbb{R}$ it holds $K (t, x+ \delta \textbf{1}, d ) = K (t,x,d)$.
This follows because the relative ordering of time-to-discharge is unaffected by a simultaneous $\delta$ increase (or decrease) affecting all batteries.
Hence, considering any solution $\varphi(t,x,d(\cdot))$ we see that:
$ \frac{d}{dt}  \varphi(t,x,d(\cdot)) + \delta \textbf{1} = - \Omega K (t,\varphi(t,x,d(\cdot)), d(t)) = - \Omega K (t,\varphi(t,x,d(\cdot))+ \delta \textbf{1} , d(t))$.
This proves that 
$ \varphi(t,x,d(\cdot)) + \delta \textbf{1}$ is a solution of system (\ref{feedbackequation}), with initial condition $x+ \delta \textbf{1}$. In other words,
$\varphi(t,x+ \delta \textbf{1}, d(\cdot) ) = \varphi(t,x,d(\cdot)) + \delta \textbf{1}$.

iii) \textit{Weak contraction}: To prove weak contraction, let $x_1, x_2$ be arbitrary in $\mathbb{R}^N$. Let the vectors $\underline{x}$ and $\bar{x} \in \mathbb{R}^N$ be defined as 
$\underline{x} := \min \{ x_1, x_2 \}$ and $\bar{x}:= \max \{x_1, x_2 \}$, where $\min$ and $\max$ are meant component-wise.
 Clearly $\bar{x} \succeq x_1 \succeq \underline{x}$ and $\bar{x} \succeq x_2 \succeq \underline{x}$.
 Exploiting monotonicity we see that, for any $t \geq 0$ and any $d(\cdot)$,
 $\varphi(t,\bar{x} , d(\cdot) ) \succeq \varphi(t,x_i,d(\cdot)) \succeq \varphi(t, \underline{x},d(\cdot) ),\, i=1,2$.
Rearranging the previous inequalities we can show that:
\begin{equation*}
\begin{aligned}
\varphi(t,\bar{x},d) - \varphi(t, \underline{x},d) &\succeq \varphi(t,x_1,d) - \varphi(t,x_2, d)\\& \succeq  -\varphi(t,\bar{x},d) +\varphi(t, \underline{x},d).
\end{aligned}
\end{equation*}
Denoting component-wise absolute values of a vector as $| \cdot |$ the previous inequality can equivalently be written as:
\begin{equation}
\label{combination}
  |\varphi(t,x_1,d) - \varphi(t,x_2, d)| \preceq   \varphi(t,\bar{x},d) - \varphi(t, \underline{x},d).
 \end{equation} 
Define next $\delta:= \max_{i \in \mathcal{N}} |x_{1i} - x_{2i}|$. Clearly, $\bar{x} \preceq \underline{x} + \delta \textbf{1}$, hence, by monotonicity and translation invariance:
\begin{equation}
\label{finalcomb}
 \varphi(t,\bar{x},d) - \varphi(t, \underline{x},d) \preceq
 \varphi(t,\underline{x}+ \delta \textbf{1},d) - \varphi(t, \underline{x},d) 
 = \delta \textbf{1}. 
 \end{equation}
Combining (\ref{combination}) with (\ref{finalcomb}) yields $|\varphi(t,x_1,d) - \varphi(t,x_2, d)| \preceq \delta \textbf{1}$,
and ultimately
$\max_{i \in \mathcal{N}} |\varphi_i(t,x_1,d) - \varphi_i(t,x_2, d)| \leq \delta$.
Denoting by $\| \cdot \|_{\infty}$ infinity norms, the previous inequality reads
$ \| \varphi(t, x_1 , d) - \varphi(t,x_2,d) \|_{\infty} \leq \| x_1 - x_2 \|_{\infty}$, 
which proves weak contractivity of $\varphi$.

iv) \textit{Continuity of $\Lambda$}: We prove continuity of $\Lambda$ under the assumption that each
availability set $\mathcal{A}_j$ is at most the union of a \emph{finite} number of (disjoint) intervals. Because of this, the same is true of the complement:
\begin{equation}
\label{assumavailability}
  \mathcal{T} \backslash \mathcal{A}_j = [t_1^j,t_2^j ] \cup [t_3^j,t_4^j] \cup \ldots \cup [t_{M-1}^j, t_M^j ]  
 \end{equation}
for some even integer $M$.
Let $\mu=[\mu(\mathcal{T} \backslash A_j )  ]_{j \in \mathcal{N}}$.
Then, for each $\lambda$, we define $\tilde{x} (0) = \textrm{diag}(\mu) \lambda$,
and define the associated map as:
$\Lambda_j( \lambda ) =  \frac{1}{\bar{P_j} } \int_{\mathcal{T} \backslash \mathcal{A}_j}
K_j ( t,\varphi(t,\tilde{x}(0),d(\cdot)), d(t) ) \, dt$.
Recalling that $K_j/ \bar{P}_j$ is the derivative of $- x_j$ with respect to time, and exploiting (\ref{assumavailability}) we see that:
\begin{equation*}
    \begin{aligned}
\Lambda_j ( \lambda) = &\varphi_j (t_1^j, \tilde{x}(0),d(\cdot)) - \varphi_j(t_2^j,\tilde{x}(0),d(\cdot))\\& + \ldots + \varphi_j(t_{M-1}^j , \tilde{x}(0),d(\cdot)) - \varphi_j(t_M^j,\tilde{x}(0),d(\cdot)).
\end{aligned}
\end{equation*} 
In order to assess the variation of $\Lambda$ with respect to $\lambda_1$ and $\lambda_2$, we consider, $\tilde{x}_1 (0) = \textrm{diag}(\mu) \lambda_1$ and $\tilde{x}_2 (0) = \textrm{diag}(\mu) \lambda_2$.
By the previous equation, then:
\begin{equation*}
\begin{aligned}
\Lambda_j ( &\lambda_1) - \Lambda_j (\lambda_2)\\ =& \varphi_j (t_1^j, \tilde{x}_1(0),d(\cdot)) - \varphi_j(t_2^j,\tilde{x}_1(0),d(\cdot))\\& + \ldots + \varphi_j(t_{M-1}^j , \tilde{x}_1(0),d(\cdot)) - \varphi_j(t_M^j,\tilde{x}_1(0),d(\cdot))  \\
& - \varphi_j (t_1^j, \tilde{x}_2(0),d(\cdot)) + \varphi_j(t_2^j,\tilde{x}_2(0),d(\cdot)) \\&+ \ldots - \varphi_j(t_{M-1}^j   , \tilde{x}_2(0),d(\cdot)) + \varphi_j(t_M^j,\tilde{x}_2(0),d(\cdot)) \\
=& \sum_{m=1}^M (-1)^{m-1} [ \varphi_j(t_m,\tilde{x}_1(0),d(\cdot)) -
\varphi_j(t_m, \tilde{x}_2(0),d(\cdot)) ] \\ \leq& M \| \tilde{x}_1(0) - \tilde{x}_2(0) \|_{\infty} = \| \textrm{diag} (\mu) [\lambda_1 - \lambda_2] \|_{\infty}. 
\end{aligned}
\end{equation*}
Notice that a similar inequality holds for $\Lambda_j(\lambda_2) - \Lambda_j ( \lambda_1)$. Hence, continuity of $\Lambda$ follows, since $\| \Lambda( \lambda_1) - \Lambda ( \lambda_2 ) \|_{\infty} \leq \bar{M} \max_j ( \mu_j )
 \| \lambda_1 - \lambda_2 \|_{\infty}$, where $\bar{M}$ is the maximum of $M$ over the fleet of devices.

\end{proof}

Notice that, thanks to property $1.$, $2.$ and by virtue of Brouwer's fixed point Theorem, $\Lambda$ admits (at least) one fixed point, namely a value $\bar{\lambda} \in [0,1]^N$ such that $\Lambda ( \bar{\lambda}) = \bar{\lambda}$.
Finally, we recall the main statement in \cite{cdcbatteries}, and later provide details of its proof.
\begin{theo}
	\label{main}
	Consider a fleet of devices $\mathcal{N}$ with initial time-to-discharge, power ratings and availability windows $x_j(0)$, $\bar{P}_j$, and $\mathcal{A}_j$ respectively, for $j \in \mathcal{N}$.  Let $d(\cdot):
	 \mathcal{T} \rightarrow [0, + \infty)$ be the aggregated power demand signal.
	 Define, for convenience, the corresponding energy vector $E_j (0) = x_j(0) \bar{P}_j$. Let $\bar{\lambda}$ be a fixed point of the associated map $\Lambda$
	 and $\tilde{x} (0)$ be the corresponding vector of time-to-discharge, viz.
	 $\tilde{x}_j (0) = x_j(0) + \bar{ \lambda}_j \mu( \mathcal{T} \backslash \mathcal{A}_j)$
	 The following facts are equivalent:
\begin{enumerate}
	\item The signal $d(\cdot)$ belongs to $\mathcal{F} ( E(0) )$ (viz. it is feasible for the fleet $\mathcal{N}$ subject to availability constraints);
	\item The signal:
	 \begin{equation}
	 \label{auxdemand}
	 \tilde{d} (t) = \sum_{j \in \mathcal{N} } K_j (t,\varphi(t,\tilde{x}(0), d(\cdot) ), d(t))
	 \end{equation}
	  is feasible for the fleet $\mathcal{N}$ \emph{without availability constraints} from the initial condition $\tilde{x} (0)$.

\end{enumerate}
	 
\end{theo}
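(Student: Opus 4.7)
The plan is to prove the two directions by linking the closed-loop trajectories generated by the two policies $K$ and $\tilde K$ via an intermediate consistency observation, and then using the fixed-point identity $\Lambda(\bar\lambda)=\bar\lambda$ to balance the energy books.

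I would start by establishing a consistency lemma: for the common initial state $\tilde x(0)$, the trajectory $\varphi(t,\tilde x(0),d(\cdot))$ produced by the availability-aware policy $K$ driven by $d$ coincides with the trajectory $\tilde\varphi(t,\tilde x(0),\tilde d(\cdot))$ produced by the availability-unaware policy $\tilde K$ driven by $\tilde d$. This is almost tautological from the construction of $\tilde d$: both policies rank devices by time-to-discharge and saturate full-power priority groups until a threshold is met; by definition of $\tilde d$ in \eqref{auxdemand}, at every time $t$ the priority index $k$ and the fractional ratio $r(t)=\tilde r(t)$ identified by the two rules coincide, so $K_i(t,x,d(t))=\tilde K_i(x,\tilde d(t))$ pointwise in $x$. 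Uniqueness of Filippov solutions (property stated earlier) then forces the two flows to agree.

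For (1)$\Rightarrow$(2), I would take any feasible dispatch $u^\ast=\{u_i^\ast\}$ certifying $d(\cdot)\in\mathcal{F}(E(0))$ (so $u_i^\ast=0$ off $\mathcal{A}_i$) and define the augmented dispatch
\[
\tilde u_i^\ast(t)=u_i^\ast(t)+K_i\bigl(t,\varphi(t,\tilde x(0),d(\cdot)),d(t)\bigr)\,\mathbf{1}_{\mathcal{T}\setminus\mathcal{A}_i}(t).
\]
Pointwise bounds $\tilde u_i^\ast\in[0,\bar P_i]$ are immediate since the two summands have disjoint time supports. Summing over $i$ recovers $\tilde d(t)$ by the very definition of $\tilde d$ combined with \eqref{poweraggregation}. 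The energy budget follows from the fixed-point identity: $\int_{\mathcal{T}\setminus\mathcal{A}_i}K_i\,dt=\bar\lambda_i\bar P_i\mu(\mathcal{T}\setminus\mathcal{A}_i)$ by the definition \eqref{Lambdamap} and $\Lambda(\bar\lambda)=\bar\lambda$, hence $\int_{\mathcal{T}}\tilde u_i^\ast\,dt\le E_i(0)+\bar\lambda_i\bar P_i\mu(\mathcal{T}\setminus\mathcal{A}_i)=\tilde E_i(0)$. This exhibits a feasible unconstrained dispatch of $\tilde d$ from $\tilde x(0)$.

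For (2)$\Rightarrow$(1), assume $\tilde d(\cdot)$ is feasible for the unconstrained fleet from $\tilde x(0)$. By the causal optimality of the unrestricted GGDDF policy established in \cite{ETA-pscc,phtr} (whenever a feasible unconstrained dispatch exists, $\tilde K$ is one such), the trajectory $\tilde\varphi(t,\tilde x(0),\tilde d(\cdot))$ stays nonnegative. By the consistency lemma this equals $\varphi(t,\tilde x(0),d(\cdot))$, so $\int_{\mathcal{T}} K_i\,dt\le\tilde x_i(0)\bar P_i$. Now set $u_i(t)=K_i(t,\varphi(t,\tilde x(0),d),d(t))\,\mathbf{1}_{\mathcal{A}_i}(t)$: the power bound and vanishing off $\mathcal{A}_i$ hold trivially, $\sum_i u_i(t)=d(t)$ is precisely \eqref{poweraggregation}, and subtracting the fixed-point identity from the previous inequality gives $\int_{\mathcal{A}_i}K_i\,dt\le\tilde x_i(0)\bar P_i-\bar\lambda_i\bar P_i\mu(\mathcal{T}\setminus\mathcal{A}_i)=E_i(0)$, certifying $d(\cdot)\in\mathcal{F}(E(0))$.

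The main obstacle I anticipate is the consistency lemma: the two feedback laws are piecewise constant and discontinuous in $x$, so equating them along a shared trajectory requires the Filippov uniqueness machinery from property (i), and one must also verify that the priority group index $k$ and boundary ratio used by $K$ at each $t$ indeed agree with those $\tilde K$ would choose given $\tilde d(t)=\sum_j K_j(t,\cdot,d(t))$, including on the measure-zero switching times. Everything else is bookkeeping hinging on the single algebraic identity $\int_{\mathcal{T}\setminus\mathcal{A}_i}K_i\,dt=\bar\lambda_i\bar P_i\mu(\mathcal{T}\setminus\mathcal{A}_i)$ produced by the fixed point of $\Lambda$.
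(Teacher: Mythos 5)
Your proposal is correct and follows essentially the same route as the paper: your ``consistency lemma'' is precisely the content of the paper's Lemma \ref{keylemma} and Lemma \ref{integratedlemma}, and the two dispatch constructions (augmenting $u$ by $K_j$ off $\mathcal{A}_j$ for $1\Rightarrow 2$, restricting $K_j$ to $\mathcal{A}_j$ for $2\Rightarrow 1$) together with the fixed-point energy identity $\int_{\mathcal{T}\setminus\mathcal{A}_i}K_i\,dt=\bar\lambda_i\bar P_i\mu(\mathcal{T}\setminus\mathcal{A}_i)$ are exactly the paper's arguments. The one caveat is that the pointwise identity $K(t,x,d)=\tilde K\bigl(x,d+\sum_{j:t\notin\mathcal{A}_j}K_j(t,x,d)\bigr)$ is not ``almost tautological'' --- the paper devotes Appendix \ref{bappendix} to a three-case verification that the priority index $k$ and the ratio $r(t)=\tilde r(t)$ coincide --- but you correctly flagged this as the main remaining obstacle rather than claiming it done.
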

Recalling that the policy $\tilde{K}$ in \eqref{feedbackequation2} differs from the feedback policy $K$ in \eqref{feedbackequation} by the power aggregation equations \eqref{poweraggregation2} and \eqref{poweraggregation}. In addition, the feedback policy $\tilde{K}$ is time-invariant (unlike $K$) as its formulation neglects availability windows. 
In order to prove our main result, Theorem \ref{main}, it is useful to establish a mathematical relation between policies $\tilde{K}$ and $K$ introduced in (\ref{policy2}) and (\ref{policy}), respectively.
\begin{lemma}\label{keylemma}
	For any $x \in \mathbb{R}^N$, any $d \geq 0$ and any $t \in \mathcal{T}$, it holds $K ( t, x, d ) = \tilde{K} \left ( x,d + \sum_{j: t \notin \mathcal{A}_j } K_j (t,x,d) \right )$.
\end{lemma}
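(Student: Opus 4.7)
The plan is to verify this identity by direct comparison of the piecewise definitions of the two policies, exploiting the fact that both are priority-based rules that process groups $\mathcal{N}_{\tau_h}(x)$ in the same order of decreasing time-to-discharge.

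First, I would fix an arbitrary $t \in \mathcal{T}$, $x \in \mathbb{R}^N$ and $d \geq 0$, and identify the critical group index $k$ appearing in the definition \eqref{feedback} of $K(t,x,d)$, i.e.\ the unique $k$ with
\[
\sum_{h<k}\sum_{j\in\mathcal{N}_{\tau_h}(x):\,t\in\mathcal{A}_j}\bar{P}_j \;\le\; d \;<\; \sum_{h\le k}\sum_{j\in\mathcal{N}_{\tau_h}(x):\,t\in\mathcal{A}_j}\bar{P}_j,
\]
together with the associated fraction $r(t)\in[0,1]$. Using \eqref{feedback}, devices in groups $h<k$ are fully discharged, those in group $k$ are discharged at fraction $r(t)$, and those in groups $h>k$ produce no power, irrespective of availability.

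Second, I would compute the augmented demand $d' := d + \sum_{j:\,t\notin\mathcal{A}_j}K_j(t,x,d)$ by splitting the sum according to the group to which $j$ belongs. For groups $h<k$ the unavailable devices contribute $\sum_{j\in\mathcal{N}_{\tau_h}:\,t\notin\mathcal{A}_j}\bar{P}_j$, and for group $k$ they contribute $r(t)\sum_{j\in\mathcal{N}_{\tau_k}:\,t\notin\mathcal{A}_j}\bar{P}_j$. Adding to $d$ and invoking the aggregation identity \eqref{poweraggregation} to express $d$ itself as the corresponding sum over available devices (in groups $h<k$ fully, and in group $k$ with the fraction $r(t)$), the availability-dependent sums collapse into sums over the whole group, giving
\[
d' \;=\; \sum_{h<k}\sum_{j\in\mathcal{N}_{\tau_h}(x)}\bar{P}_j \;+\; r(t)\sum_{j\in\mathcal{N}_{\tau_k}(x)}\bar{P}_j.
\]

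Third, I would apply $\tilde{K}(x,d')$ using \eqref{feedback2}. Because $r(t)\in[0,1]$, the value $d'$ lies in the bracket $\bigl[\sum_{h<k}\sum_{j\in\mathcal{N}_{\tau_h}}\bar{P}_j,\;\sum_{h\le k}\sum_{j\in\mathcal{N}_{\tau_h}}\bar{P}_j\bigr]$, so the critical group index selected by $\tilde{K}$ coincides with $k$, and the explicit formula for $\tilde{r}(t)$ simplifies to exactly $r(t)$. Hence $\tilde{K}_i(x,d')$ returns $\bar{P}_i$ on groups $h<k$, $r(t)\bar{P}_i$ on group $k$, and $0$ on groups $h>k$ -- identical, entry by entry, to $K_i(t,x,d)$.

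The only conceptual subtlety is the handling of boundary cases ($r(t)\in\{0,1\}$, zero demand, or groups with no available device), where the choice of critical group index in each policy must be shown to match up under the adopted tie-breaking conventions; once this is pinned down, the argument reduces to the routine case-by-case verification above. The main takeaway, which makes the lemma essentially automatic, is that $d'$ has been constructed precisely as the total power withdrawn by $K$ from the entire fleet, so feeding $d'$ into the availability-blind policy $\tilde{K}$ must reproduce the same device-level dispatch.
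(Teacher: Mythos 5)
Your proposal is correct and follows essentially the same route as the paper's proof: both split the unavailable-device contributions by priority group, use the aggregation identity to show the augmented demand equals $\sum_{h<k}\sum_{j\in\mathcal{N}_{\tau_h}}\bar{P}_j + r(t)\sum_{j\in\mathcal{N}_{\tau_k}}\bar{P}_j$, and conclude that $\tilde{K}$ selects the same critical group with $\tilde{r}(t)=r(t)$. The only organizational difference is that you compute the augmented demand in closed form once and read off all three branches, whereas the paper verifies the bracketing inequalities separately in each of the three cases of the policy definition.
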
	
For the sake of readability, we defer the proof of the Lemma to Appendix \ref{bappendix}.
A useful consequence of Lemma \ref{keylemma} is the following alternative expression for solutions of (\ref{feedbackequation2}).
\begin{lemma}
	\label{integratedlemma}
	Let $x \in \mathbb{R}^N$ be arbitrary, and $d:\mathcal{T} \rightarrow \mathbb{R}_{\geq 0}$ denote a given power profile.  Let $\tilde{d} (t) = d(t) + \sum_{j: t \notin \mathcal{A}_j} K_j
	(t,\varphi(t,x,d(\cdot)),d(t) )$. Then, for any $t \in \mathcal{T}$, it holds $\varphi(t,x,d(\cdot)) = \tilde{ \varphi } \left (t,x, \tilde{d} ( \cdot) \right )$.
\end{lemma}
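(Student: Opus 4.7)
The plan is to derive the conclusion by substituting Lemma~\ref{keylemma} directly into the differential equation satisfied by $\varphi$, and then invoking uniqueness of the solution of the availability-free closed-loop system.

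First, I would start from the closed-loop equation \eqref{feedbackequation} defining $\varphi$: for almost every $t \in \mathcal{T}$,
\begin{equation*}
\dot{\varphi}(t,x,d(\cdot)) = -\Omega\, K\bigl(t,\varphi(t,x,d(\cdot)), d(t)\bigr),\qquad \varphi(0,x,d(\cdot))=x.
\end{equation*}
Now I would apply Lemma~\ref{keylemma} pointwise in time with the state argument $\varphi(t,x,d(\cdot))$ and scalar demand $d(t)$. By construction of $\tilde{d}$, the extra term appearing inside $\tilde{K}$ is precisely $\sum_{j: t\notin \mathcal{A}_j} K_j(t,\varphi(t,x,d(\cdot)),d(t))$, so the lemma yields
\begin{equation*}
K\bigl(t,\varphi(t,x,d(\cdot)),d(t)\bigr) \;=\; \tilde{K}\bigl(\varphi(t,x,d(\cdot)),\,\tilde{d}(t)\bigr).
\end{equation*}

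Substituting this identity into the ODE above, I conclude that $\varphi(\cdot,x,d(\cdot))$ is an (absolutely continuous) solution of
\begin{equation*}
\dot{y}(t) = -\Omega\, \tilde{K}\bigl(y(t),\tilde{d}(t)\bigr),\qquad y(0)=x,
\end{equation*}
which is exactly the closed-loop system \eqref{feedbackequation2} driven by the augmented demand $\tilde{d}(\cdot)$, whose unique solution is by definition $\tilde{\varphi}(t,x,\tilde{d}(\cdot))$. Since $\tilde{d}(\cdot)$ is determined entirely from the already-existing trajectory $\varphi$, it is a fixed measurable function of time, and Property 1 stated for the GGDDF flow (adapted in the obvious way to the availability-free policy $\tilde{K}$, as in \cite{michaelphd}) guarantees uniqueness of Filippov solutions. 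Hence $\varphi(t,x,d(\cdot))=\tilde{\varphi}(t,x,\tilde{d}(\cdot))$ for all $t\in\mathcal{T}$.

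The main obstacle I anticipate is the discontinuous right-hand side: $\tilde{K}$ is piecewise constant in its state argument, so the uniqueness claim invoked in the last step is not automatic and relies on the Filippov/Caratheodory argument already referenced in the proof of the previous properties. As long as that uniqueness holds along the specific trajectory at hand — which follows from the preservation of the ordering of time-to-discharge and the monotone/weakly contractive structure established above — the substitution argument closes the proof with no additional calculation.
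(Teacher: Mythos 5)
Your proposal is correct and follows essentially the same route as the paper's own proof: substitute Lemma~\ref{keylemma} pointwise into the closed-loop ODE for $\varphi$ to show that $\varphi(\cdot,x,d(\cdot))$ solves \eqref{feedbackequation2} with input $\tilde{d}$, then identify it with $\tilde{\varphi}(t,x,\tilde{d}(\cdot))$. The only difference is that you make the uniqueness-of-solutions step (and the Filippov subtlety behind it) explicit, which the paper leaves implicit; this is a welcome clarification rather than a different argument.
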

\begin{proof} 
Let $x(t):=  \varphi  (t,x, d(\cdot) )$. Clearly $x(0) = x$, moreover, taking derivatives with respect to time yields:
\begin{equation*}
    \begin{aligned}
\dot{x} (t) =& - \Omega K (t,  x(t), d(t))\\
 = & - \Omega \tilde{K}\left ( x(t), d(t)+\sum_{j: t \notin \mathcal{A}_j} K_j (t,x(t),d(t)) \right ).
     \end{aligned}
\end{equation*}
 Hence $x(t)$ is also solution of (\ref{feedbackequation2}) with initial condition $x$ and input signal $\tilde{d}(t)$. This proves Lemma \ref{integratedlemma}.
 \end{proof}

We are now ready to prove Theorem \ref{main}.
\begin{proof}
We show first the implication $1 \Rightarrow 2$.
Let $d: \mathcal{T} \rightarrow \mathbb{R}_{\geq 0}$ be a feasible power profile with respect to availability sets $\mathcal{A}_j$ ($j \in \mathcal{N}$) and with initial time-to-discharge distribution $x(0)$. Let $\bar{\lambda}$ be a fixed point of $\Lambda(\cdot)$ and $\tilde{x} (0)$ defined according to $\tilde{x}_j (0)
= x_j(0) + \lambda_j \mu( \mathcal{T} \backslash \mathcal{A}_j)$.
Then, there exist $u_j(\cdot): \mathcal{T} \rightarrow [0, \bar{P}_j]$, such that: 
\begin{enumerate}
\item $\sum_{j \in \mathcal{N}} u_j (t) = d(t)$;
\item $u_j(t) =0$ for all $j$ and all $t \notin \mathcal{A}_j$;
\item $\int_{\mathcal{T}} u_j(t) dt \leq x_j (0) \bar{P}_j$.
\end{enumerate}
Consider the following auxiliary input signals:
\[   \tilde{u}_j (t) = \left \{ \begin{array}{rl} u_j(t) & \textrm{if } t \in \mathcal{A}_j \\ K_j ( t, \varphi(t,\tilde{x}(0),d(\cdot)), d(t) ) & \textrm{if } t \notin \mathcal{A}_j. \end{array} \right. \] 
We claim that $\tilde{u}_j$ are a feasible input for demand profile $\tilde{d}$ (without availability restrictions) and for initial time-to-discharge distribution $\tilde{x}(0)$.
To this end notice that:
\begin{equation*}
    \begin{aligned}
     \sum_{j \in \mathcal{N} } \tilde{u}_j(t) =&
	\sum_{j: t \in \mathcal{A}_j } u_j(t) + \sum_{j: t \notin \mathcal{A}_j }
	K_j ( t, \varphi(t,\tilde{x}(0),d(\cdot)), d(t) )	\\
 =&\,\, d(t) +  \sum_{j: t \notin \mathcal{A}_j }
	K_j ( t, \varphi(t,\tilde{x}(0),d(\cdot)), d(t) ) \\
	=& \sum_{j: t \in \mathcal{A}_j }
		K_j ( t, \varphi(t,\tilde{x}(0),d(\cdot)), d(t) ) \\&\,\,+  \sum_{j: t \notin \mathcal{A}_j }
			K_j ( t, \varphi(t,\tilde{x}(0),d(\cdot)), d(t) )= \tilde{d} (t).
\end{aligned}
\end{equation*}
Moreover:
\begin{equation*}
\begin{aligned}
\int_{\mathcal{T}} \tilde{u}_j (t) dt =&
\int_{\mathcal{A}_j}\hspace{-3mm} u_j(t) dt + \int_{\mathcal{T} \backslash \mathcal{A}_j} \hspace{-3mm} K_j ( t, \varphi(t,\tilde{x}(0),d(\cdot)), d(t) ) dt \\
\leq& x_j (0) \bar{P}_j + \Lambda_j(\bar{\lambda}) \mu( \mathcal{T} \backslash \mathcal{A}_j ) \bar{P}_j = \tilde{x}_j (0) \bar{P}_j,
\end{aligned}
\end{equation*}
where the last inequality follows by recalling that $\bar{\lambda}$ is a fixed point of $\Lambda$ and definition of $\tilde{x}(0)$. This completes the proof of our claim. 

We show next the implication $2 \Rightarrow 1$.
Let us assume $\tilde{d}(\cdot)$ be feasible for initial condition $\tilde{x}(0)$ and disregarding availability windows. Then, by the main result in \cite{ETA-pscc},  it can be dispatched through the GGDDF policy, and in particular:
$\tilde{\varphi} (t, \tilde{x} (0), \tilde{d} ) \succeq 0$ for all $t \in \mathcal{T}$.
By Lemma \ref{integratedlemma}, it follows that $\varphi(t,\tilde{x}(0), d(t)) =  \tilde{\varphi} (t, \tilde{x} (0), \tilde{d} ) \succeq 0$ for all $t \in \mathcal{T}$.
Equivalently, for all $j \in \mathcal{N}$:
\begin{equation}
\label{feasconsequence}
\tilde{x}_j(0) - \int_{\mathcal{T}} K_j (t,\varphi(t,\tilde{x}(0),d(\cdot)),d(t)) \, dt   \geq 0 
\end{equation}
We claim that the dispatch policy $u_j(t)$ defined below, proves feasibility of $d(t)$ for the fleet with initial condition $x(0)$ and availability windows $\mathcal{A}_j$:
\begin{equation}
    \label{tobediscussed}
 u_j(t)  = \left \{   \begin{array}{rl} K_j (t,\varphi(t,\tilde{x} (0), d(\cdot)),d(t)) & \textrm{if }
t \in \mathcal{A}_j \\
0 & \textrm{if } t \notin \mathcal{A}_j.
\end{array} \right .
 \end{equation}
 Indeed, $u_j(t) \in [0, \bar{P}_j ]$ for all $t \in \mathcal{T}$. Moreover,
 $u_j(t) = 0$ for $t \notin \mathcal{A}_j$.
 In addition:
\begin{equation*}
\begin{aligned}
 \int_{\mathcal{T}} u_j(t) dt =& \int_{\mathcal{A}_j} K_j(t,\varphi(t,\tilde{x}(0),d(\cdot),d(t)) dt \\
=&
 \int_{\mathcal{T}} K_j(t,\varphi(t,\tilde{x}(0),d(\cdot),d(t)) \, dt
\\ & \,\,-  \int_{\mathcal{T} \backslash \mathcal{A}_j} K_j(t,\varphi(t,\tilde{x}(0),d(\cdot),d(t)) \, dt \\
\leq& \bar{P}_j \tilde{x}(0) - \bar{\lambda}_j \mu(\mathcal{T} \backslash \mathcal{A}_j) \bar{P}_j = \bar{P}_j x(0).
\end{aligned}
 \end{equation*}
where the last inequality follows by (\ref{feasconsequence}) and definition of
$\Lambda (\cdot)$ and $\bar{\lambda}$. This completes the proof of the implication.
\end{proof}

\section{Discussion and Interpretations}

\subsection{Existence of the fixed point $\bar{\lambda}$}
Our first result, Theorem \ref{main}, provides a characterization of feasible aggregate demand profiles for storage fleets in the presence of availability constraints, by converting it to the simpler (and previously addressed) problem of dispatch for a fleet without availability constraints.

Specifically, running a numerical or analytical simulation of the GGDDF policy with availability windows within the time-interval $\mathcal{T}$ allows to compute $\varphi(t,x_0,d(\cdot))$ and to integrate the amount of energy delivered by each agents outside their availability window, according to (\ref{outsideenergy}).
This, for any $\lambda$-dependent initialization $\tilde{x}(0)$, defines the map $\Lambda$, as specified in (\ref{Lambdamap}).
A fixed-point $\bar{\lambda}$ of $\Lambda$ can then be found using a numerical iterative scheme.
Notice that, by continuity of $\Lambda$, and forward invariance of $[0,1]^N$, we may apply Brouwer's fixed point Theorem, \cite{topology}, to conclude that such $\bar{\lambda}$ always exists. 

Extensive simulations have shown that the limit
$ \lim_{k \rightarrow + \infty} \Lambda^k(\tilde{x}(0)) $
exists for any choice of $\tilde{x}(0)$ (though this was not proved formally) leading to conjecture that a fixed point can simply be computed by iterating the map $\Lambda$. As shown in (\ref{tobediscussed}), the GGDDF policy initialized with $\tilde{x}(0)$ computed from $\bar{\lambda}$ allows to use $K_j (t, \varphi(t,\tilde{x}(0),d(\cdot)), d(t)$ restricted to the availability set $\mathcal{A}_j$ as a feasible dispatch input for device $j$. 

This dispatch policy matches a GGDDF policy without availability windows for a suitably inflated demand signal, but unlike the case of full availability, it is not causal, as it requires prior computation of the fixed point $\bar{\lambda}$ which, implicitly, takes into account demand over the whole horizon. 
As shown below, this is not a weakness of the approach, but rather a consequence of the considered set-up. Indeed, considering a two batteries fleet, is enough to show that no causal policy exists in general.

\subsection{Impossibility of causal dispatch}
Consider a fleet $\mathcal{N} = \{1,2\}$, with rated powers 
$\bar{P}_1 = \bar{P}_2 = 1$ and assume an initial discharge time $x_1(0)=3$ and $x_2(0)=6$, respectively.
The availability windows $\mathcal{A}_1 = [0,5]$ and $\mathcal{A}_2=[0,12]=\mathcal{T}$ are assigned.
We propose next two feasible aggregated demand signals, $d_1(t)$ and $d_2(t)$, as shown in Fig.
\ref{feaspow}.
\begin{figure}
\centering
\includegraphics[width=0.23\textwidth]{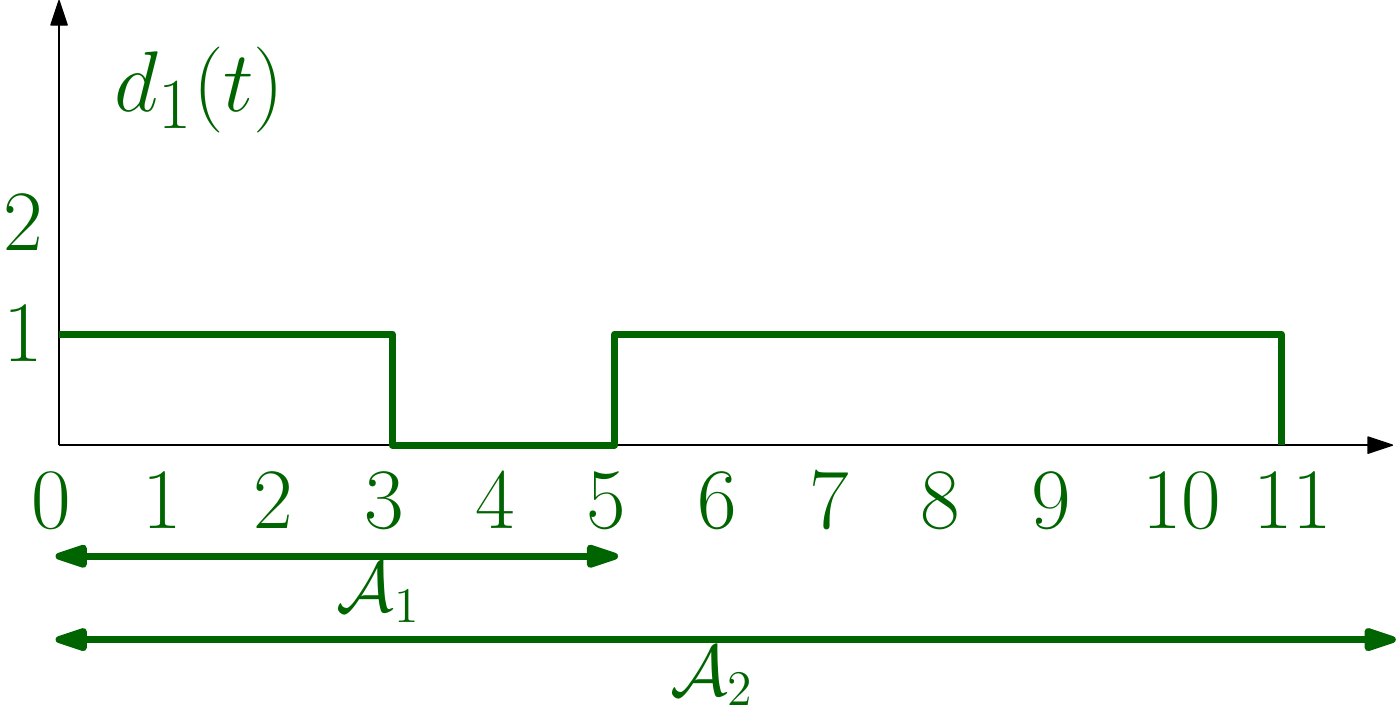}
\includegraphics[width=0.23\textwidth]{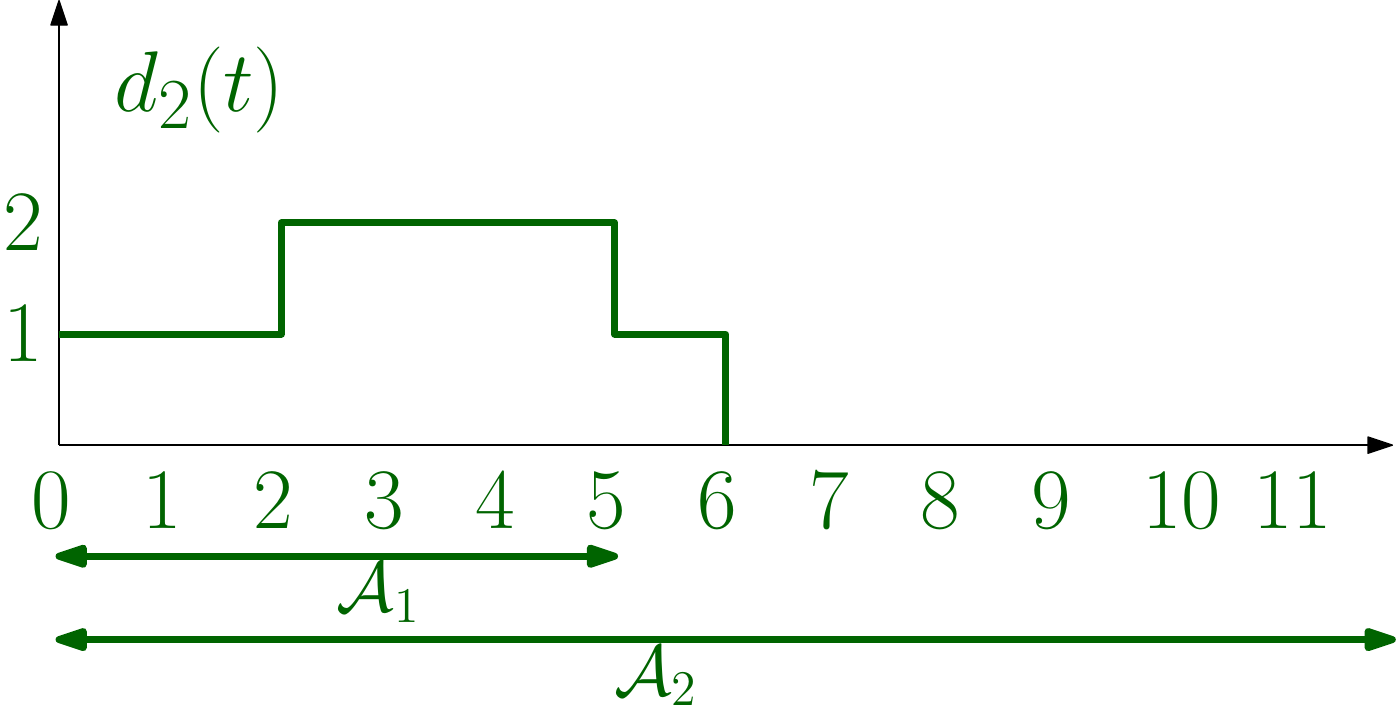}
\caption{Feasible power profiles $d_1$ and $d_2$}
\label{feaspow}
\end{figure} 
Notice that $d_1(t)$ and $d_2(t)$ coincide over the initial interval $[0,2]$.
However, it is their respective behaviours for $t \geq 2$ which determines who is supposed to deliver the first two power units. Specifically, $d_1(t)$ can only be fulfilled with device $1$ delivering power in the interval $[0,3]$ and device $2$ delivering on the interval $[5,11]$.
On the contrary, $d_2(t)$ can only be met if device $1$ delivers power over the interval $[2,5]$ and device $2$ in the interval $[0,6]$.
This proves that, even in elementary situations, causal dispatch policies do not exist.

In the light of our previous result, Theorem \ref{main}, it is worth noticing that the dispatch for $d_2(t)$ coincides with a fixed point $\bar{\lambda}=[0,0]'$. Indeed, the GGDDF policy (without availability windows) would initially allocate all the power to the second device (because of its higher discharge time), and additionally trigger battery $1$ in the interval $[2,5]$.
The profile $d_1(t)$ corresponds instead to $\bar{\lambda}=[6/7,0]'$.
Notice that, with such choice of $\bar{\lambda}$ we get
$\tilde{x}(0) = [3+ 7 \cdot 6/7, 6]' = [9,6]'$.
Hence, in $[0,3]$ only device $1$ is discharging, due to its highest discharge time, while at time $3$ the following equality $\tilde{x}_1 (3)=\tilde{x}_2(3)=6$ is achieved, so that
devices will then discharge together until time $11$, when they will both be empty.
The energy delivered outside the availability window by device $1$ is therefore $6$, which corresponds to $\lambda_1 = 6/\mu(\mathcal{T} \backslash \mathcal{A}_1) = 6/7$. Instead $\mathcal{A}_2 = \mathcal{T}$ and therefore $\lambda_2=0$.

\section{Time-domain characterization of feasible set}
In this Section, we present necessary and sufficient conditions on the aggregate demand signal $d(t)$ for feasibility with respect to a fleet $\mathcal{N}$ with given initial conditions, power ratings and availability sets.
Our main result states that a signal $d(t)$ is feasible if and only if it fulfills a set of linear constraints that can be explicitly computed as functions of initial conditions and availability windows.
\begin{theo}
	\label{necandsufficient}
	Consider a fleet $\mathcal{N}$ of batteries with maximum power ratings $\bar{P}_j$, availability windows $\mathcal{A}_j \subset \mathcal{T}$
	and initial time-to-discharge $x_j(0)$ respectively, for all $j \in \mathcal{N}$.
	A signal $d:\mathcal{T} \rightarrow [0,+\infty)$ is feasible for $\mathcal{N}$ if and only if, for all $\mathcal{W} \subset\mathcal{T}$ the following inequality holds:
	\begin{equation}
	\label{fundamentallimitation}
	\int_{\mathcal{W}} d(t) \, dt \leq \sum_{j \in \mathcal{N}} \min \{
	\mu( \mathcal{A}_j \cap \mathcal{W} ), x_j(0) \} \bar{P}_j.
	\end{equation}
\end{theo}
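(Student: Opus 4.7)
Necessity of (\ref{fundamentallimitation}) is a direct computation. Given any feasible dispatch $\{u_j\}_{j\in\mathcal{N}}$ and any measurable $\mathcal{W}\subset\mathcal{T}$, integrating the identity $\sum_j u_j(t)=d(t)$ over $\mathcal{W}$ yields
\[
\int_{\mathcal{W}} d(t)\,dt \;=\; \sum_{j\in\mathcal{N}} \int_{\mathcal{W}\cap\mathcal{A}_j} u_j(t)\,dt,
\]
since $u_j$ vanishes off $\mathcal{A}_j$. For each $j$ I would bound the summand in two ways: by the rate constraint $u_j\leq \bar P_j$, giving $\bar P_j\,\mu(\mathcal{A}_j\cap\mathcal{W})$; and by enlarging the integration domain to $\mathcal{A}_j$ and invoking the energy budget, giving $\int_{\mathcal{A}_j} u_j\,dt \leq x_j(0)\bar P_j$. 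Taking the pointwise minimum of the two bounds and summing over $j$ delivers (\ref{fundamentallimitation}).

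For sufficiency, my plan is to reduce the problem to a capacitated transportation LP via time-discretization and invoke bipartite max-flow / min-cut. Take a refining partition of $\mathcal{T}$ so that on each small interval $I$ the demand $d$ is approximately constant and each $\mathcal{A}_j$ is well-approximated by a union of partition intervals. Feasibility then becomes whether an LP on the bipartite graph with time-intervals on one side (``supply'' $d(I)|I|$) and devices on the other (``capacity'' $x_j(0)\bar P_j$), carrying an edge $(I,j)$ iff $I\subset\mathcal{A}_j$ with flow at most $\bar P_j|I|$, admits a demand-saturating assignment. By max-flow / min-cut, infeasibility is witnessed by a cut parametrized by a subset of intervals $\mathcal{W}$, whose numerical content is precisely the negation of (\ref{fundamentallimitation}) restricted to that $\mathcal{W}$. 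Hence if (\ref{fundamentallimitation}) holds for every $\mathcal{W}$, each discretized problem is feasible, and one passes to the continuous-time limit using weak-$\ast$ compactness of $\{u_j\}$ in $L^\infty(\mathcal{T})$.

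An alternative, more self-contained route leverages Theorem~\ref{main}: run the GGDDF policy (\ref{feedback}) from $\tilde x(0)=x(0)+\textrm{diag}(\mu)\bar\lambda$ with $\bar\lambda$ a fixed point of $\Lambda$, and suppose by contradiction that some device $j^\star$ is driven strictly below zero at some time $t^\star$. Monotonicity and ordering preservation of $\varphi$, together with the greedy structure of $K$, isolate a drained sub-fleet $\mathcal{J}\subseteq\mathcal{N}$ and a window $\mathcal{W}\subseteq[0,t^\star]$ over which each $j\in\mathcal{J}$ saturates either its rate or its energy budget; the fixed-point identity $\Lambda(\bar\lambda)=\bar\lambda$ ensures that the extra energy delivered outside availability cancels neatly, producing $\int_\mathcal{W} d\,dt$ strictly greater than the right-hand side of (\ref{fundamentallimitation}), a contradiction.

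The main obstacle is the sufficiency direction, and within it the construction of the witness $\mathcal{W}$ with the correct \emph{min-splitting} device-by-device: for each $j$ one must argue which of $\mu(\mathcal{A}_j\cap\mathcal{W})$ or $x_j(0)$ binds, and that the extremal $\mathcal{W}$ is arranged so the right choice is saturated. In the LP route this is packaged automatically by the min-cut structure; in the GGDDF route it is the identification of the critical sub-fleet exhausting its budget over the right time window, which uses the greedy priority ordering in an essential way. Either route requires some measure-theoretic care in continuous time, but no further conceptual innovation.
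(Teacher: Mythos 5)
Your necessity argument is exactly the paper's: integrate $\sum_j u_j=d$ over $\mathcal{W}$, restrict each term to $\mathcal{W}\cap\mathcal{A}_j$, and bound it by the smaller of the rate bound $\bar P_j\,\mu(\mathcal{A}_j\cap\mathcal{W})$ and the energy budget $x_j(0)\bar P_j$. For sufficiency the paper takes your ``alternative'' route, argued by contrapositive: if $d$ is unfeasible, Theorem~\ref{main} implies the GGDDF trajectory from the fixed-point-augmented state $\hat x^{\bar\lambda}(0)$ drives the set $\mathcal{N}_e$ of devices with \emph{negative terminal} energy to be nonempty; with $u_j=K_j$ on $\mathcal{A}_j$ and $0$ elsewhere, the witness is $\mathcal{W}=\bigcup_{j\in\mathcal{N}_e}\mathrm{supp}(u_j)$, and the greedy priority ordering gives the key implication that whenever some $i\in\mathcal{N}_e$ is discharging, every available $j\notin\mathcal{N}_e$ runs at full power $\bar P_j$. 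This resolves the min-splitting you correctly flag as the crux, but oppositely to how you describe it: the devices \emph{outside} the drained sub-fleet saturate the rate term $\mu(\mathcal{A}_j\cap\mathcal{W})\bar P_j$ on $\mathcal{W}$, while those \emph{in} $\mathcal{N}_e$ strictly exceed the energy term $x_j(0)\bar P_j$ (by the fixed-point cancellation of the out-of-window energy), yielding a strict violation of (\ref{fundamentallimitation}). Your primary LP/max-flow route is genuinely different from the paper and is sound: the min-cut of the bipartite transportation network is parametrized by a set of intervals $\mathcal{W}$ and its value is exactly the right-hand side of (\ref{fundamentallimitation}) plus the unselected demand, so it delivers the discrete Corollary (inequality (\ref{fundamentallimitationdt})) essentially for free and without any reference to the $\Lambda$ machinery; its cost is the continuous-time limit, where under-approximating $\mathcal{A}_j$ by grid cells shrinks $\mu(\mathcal{A}_j\cap\mathcal{W})$ and so the discretized conditions are not literally implied by (\ref{fundamentallimitation}) — one needs a vanishing perturbation (or grids adapted to the finitely many interval endpoints) before passing to the weak-$\ast$ limit. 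Either route closes the proof; the paper's has the advantage of reusing Theorem~\ref{main} and producing the violating $\mathcal{W}$ explicitly from the dispatch itself.
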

From a physical point of view, equation (\ref{fundamentallimitation}) requires that the
energy request over any time window $\mathcal{W}$ be less than what the fleet can deliver, over the same time window $\mathcal{W}$ by operating at rated power (for devices who have enough energy) or at any other rate that yields a depletion of the battery within the considered time window, for those who haven't. 
It is worth pointing out that the above conditions, in the case of $\mathcal{A}_j=
[0,+\infty)$ for all $j \in \mathcal{N}$ boil down to existing characterizations of feasibility. Indeed, equation (\ref{fundamentallimitation}) reads:
\begin{equation*}
\begin{aligned}
\int_{\mathcal{W}} d(t) \, dt \leq& \sum_{j \in \mathcal{N}} \min \{
\mu( \mathcal{W} ), x_j(0) \} \bar{P}_j \\
=&\sum_{j \in \mathcal{N}} \bar{P}_j \int_0^{\mu( \mathcal{W})}  
\textbf{1} (t)- \textbf{1} (t - x_j(0)) \, dt, 
\end{aligned}
\end{equation*}
with $\textbf{1}(\cdot)$ denoting the Heaviside function.
Specifically, for non-increasing demand signals $d(\cdot)$, the maximum integral value for a given set $\mathcal{W}$ of assigned measure $\mu(\mathcal{W})$, is achieved for
$\mathcal{W} = [0, \mu( \mathcal{W} )]$, viz. $\int_{\mathcal{W}} d(t) \, dt \leq \int_0^{\mu(W)} d(t) \, dt$.
Hence, conditions (\ref{fundamentallimitation}) can equivalently be stated (in the case of non-increasing signals), as:
\[ \int_{0}^T d(t) dt \leq  \int_0^T \sum_{j \in \mathcal{N}} \bar{P}_j   
[\textbf{1} (t)- \textbf{1} (t - x_j(0)) ] \, dt, \]
for all $T \in \mathcal{T}$ which is in agreement with the result in \cite{zachary}.

An important Corollary of Theorem \ref{necandsufficient} is for demand signals $d(t)$ which are piecewise constant on finitely many equally spaced time intervals, and for availability sets which are union of such equally spaced intervals.
This corresponds to $d(t)$ of the form $d(t) = \sum^{T}_{k=1} d_{k} \left[ \textbf{1} (t-k) - \textbf{1} (t - (k+1) ) \right]$
where $\textbf{1}(\cdot)$ is the Heaviside function. From a practical point of view this correspond to a \emph{discrete} time formulation of the dispatch problem.
Let us denote the time-horizon as $\mathcal{T} = \{1,2,\ldots,T\}$ and the availability windows $\mathcal{A}_j\subseteq \mathcal{T}$.
\begin{corollary}
	Consider a fleet $\mathcal{N}$ of batteries with maximum power ratings $\bar{P}_j$, availability windows $\mathcal{A}_j \subset \mathcal{T}$
	and initial time-to-discharge $x_j(0)$ respectively, for all $j \in \mathcal{N}$.
	A signal $d:\mathcal{T} \rightarrow [0,+\infty)$ is feasible for $\mathcal{N}$ if and only if, for all $\mathcal{W} \subset\mathcal{T}$ the following inequality holds:
	\begin{equation}
	\label{fundamentallimitationdt}
	\sum_{k \in \mathcal{W}} d(k)  \leq \sum_{j \in \mathcal{N}} \min \{
	\emph{\textrm{card}}( \mathcal{A}_j \cap \mathcal{W} ), x_j(0) \} \bar{P}_j.
	\end{equation}
\end{corollary}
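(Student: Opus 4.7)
The plan is to derive the Corollary as a direct specialization of Theorem \ref{necandsufficient} to the piecewise-constant setting, exploiting the combinatorial structure to reduce the continuum of constraints (over all measurable $\mathcal{W}\subset\mathcal{T}$) to the finite family indexed by subsets of $\{1,\ldots,T\}$. Introduce the unit intervals $I_k:=[k,k+1)$ so that $d(t)=d_k$ on $I_k$, $\mathcal{A}_j=\bigcup_{k\in\widehat{\mathcal{A}}_j}I_k$ for some $\widehat{\mathcal{A}}_j\subseteq\{1,\ldots,T\}$, and the continuous horizon becomes $\mathcal{T}_c=[1,T+1)$.

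For the necessity direction, any discrete $\mathcal{W}\subseteq\{1,\ldots,T\}$ lifts to the continuous set $\mathcal{W}_c:=\bigcup_{k\in\mathcal{W}}I_k$; a direct computation gives $\int_{\mathcal{W}_c}d(t)\,dt=\sum_{k\in\mathcal{W}}d_k$ and $\mu(\mathcal{A}_j\cap\mathcal{W}_c)=\textrm{card}(\widehat{\mathcal{A}}_j\cap\mathcal{W})$, so applying Theorem \ref{necandsufficient} to $\mathcal{W}_c$ yields exactly (\ref{fundamentallimitationdt}).

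For the sufficiency direction, assuming (\ref{fundamentallimitationdt}) for every $\mathcal{W}\subseteq\{1,\ldots,T\}$, one must recover (\ref{fundamentallimitation}) for every measurable $\mathcal{W}_c\subseteq\mathcal{T}_c$. Parametrize such a set by $\alpha\in[0,1]^T$ via $\alpha_k:=\mu(\mathcal{W}_c\cap I_k)$; then $\int_{\mathcal{W}_c}d(t)\,dt=\sum_k d_k\alpha_k$ and $\mu(\mathcal{A}_j\cap\mathcal{W}_c)=\sum_{k\in\widehat{\mathcal{A}}_j}\alpha_k$, so the continuous condition reduces to $f(\alpha)\leq 0$ on $[0,1]^T$ where
\begin{equation*}
f(\alpha):=\sum_{k=1}^{T} d_k\,\alpha_k \;-\; \sum_{j\in\mathcal{N}} \bar{P}_j\,\min\!\Bigl\{\sum_{k\in\widehat{\mathcal{A}}_j}\alpha_k,\;x_j(0)\Bigr\}.
\end{equation*}
The hypothesis (\ref{fundamentallimitationdt}) is precisely the statement that $f(\alpha)\leq 0$ at the vertices $\alpha\in\{0,1\}^T$.

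The crucial observation is that $f$ is \emph{convex} on $[0,1]^T$: the first sum is linear in $\alpha$, whereas each term $\min\{\ell_j(\alpha),x_j(0)\}$ with $\ell_j(\alpha):=\sum_{k\in\widehat{\mathcal{A}}_j}\alpha_k$ is a minimum of two affine functions, hence concave, and convex minus concave is convex. A convex function on the compact polytope $[0,1]^T$ attains its maximum at a vertex, so $\max_{\alpha\in[0,1]^T}f(\alpha)=\max_{\alpha\in\{0,1\}^T}f(\alpha)\leq 0$, establishing the continuous feasibility criterion and hence, via Theorem \ref{necandsufficient}, feasibility of $d$. The main obstacle, such as it is, lies in the bookkeeping of the bijection $\mathcal{W}_c\leftrightarrow\alpha$ and the verification that both sides of the inequality depend on $\mathcal{W}_c$ only through $\alpha$; the reduction to vertices is then a standard LP-style argument requiring no new machinery beyond the previous theorem.
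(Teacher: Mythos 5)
Your proof is correct. The paper itself states this Corollary without any proof, presenting it as an immediate specialization of Theorem \ref{necandsufficient}, so there is no in-paper argument to compare against; what you have written supplies exactly the step that the paper leaves implicit. The necessity direction is indeed pure bookkeeping. The sufficiency direction is the only place where something could go wrong, and your treatment of it is the right one: both sides of (\ref{fundamentallimitation}) depend on a measurable window $\mathcal{W}_c$ only through the vector $\alpha$ of per-interval measures (this uses that $d$ is constant on each $I_k$ and that each $\mathcal{A}_j$ is a union of whole intervals $I_k$), the resulting function $f(\alpha)$ is linear minus a nonnegative combination of concave $\min$'s and hence convex, and a convex function on $[0,1]^T$ is maximized at a vertex of the cube, i.e.\ at a set $\mathcal{W}_c$ that is a union of whole sampling intervals, which is precisely the family covered by hypothesis (\ref{fundamentallimitationdt}). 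The one point you pass over silently is the identification of discrete feasibility with feasibility of the associated piecewise-constant continuous-time problem (which is what Theorem \ref{necandsufficient} actually certifies); this is harmless here because averaging any continuous-time feasible dispatch $u_j$ over each unit interval preserves the power bounds, the availability constraints (again since $\mathcal{A}_j$ is a union of whole intervals), the aggregate-demand equality, and the total-energy constraint in (\ref{feset}), yielding a piecewise-constant, i.e.\ discrete, feasible dispatch. With that remark added, the argument is complete.
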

It is worth pointing out that inequalities (\ref{fundamentallimitationdt}) completely characterize the set of feasible demand profiles for any fleet $\mathcal{N}$ with arbitrary availability sets. For a time-horizon of $T$ sampling intervals, $2^T$ constraints are enough to characterize the polytope of feasible aggregated demand profiles, regardless of the size of the fleet.

\section{On optimality of dispatch policies}
Throughout this Section, we highlight important optimality properties of the dispatch policies previously introduced.
We defined already the set of feasible demand profiles for a fleet subject to availability constraints, according to equation (\ref{feset}).
We define a similar notion, for fleets without full availability, according to:
\begin{equation}
\label{feset2}
\begin{array}{rcl}
\tilde{\mathcal{F}} ( E(0) ) &:=& \Big \{ d(\cdot): [0,+\infty)  \rightarrow [0,+ \infty): \\
& & \exists \{ u_i : [0,+\infty) \rightarrow [0,\bar{P}_i] \}_{i \in \mathcal{N}}: \\
& &E_i(0)- \int_0^t u_i(\tau) d \tau \geq 0, \,\, \forall \,\, t \geq 0  \\
& & d(t) = \sum_{i \in \mathcal{N}} u_i(t) \Big \}.
\end{array}
\end{equation}
One important feature of the GGDDF policy in the case of \emph{full} device availability, is that it maximizes future flexibility, viz. the set of feasible demand profiles is as large as possible, when compared with respect to set-inclusion.  
A similar result also holds in this case, provided flexibility is measured on a fleet without availability constraints, and only at the end of the considered time horizon.
\begin{theo}
	\label{optimalfeas}
		Consider a fleet of devices $\mathcal{N}$ with initial time-to-discharge, power ratings and availability windows $x_j(0)$, $\bar{P}_j$, and $\mathcal{A}_j$ respectively, for $j \in \mathcal{N}$.  
	Define, for convenience, the corresponding energy vector $E_j (0) = x_j(0) \bar{P}_j$ and let $d(\cdot):
	\mathcal{T} \rightarrow [0, + \infty)$ be a feasible aggregated demand signal, viz. $d \in \mathcal{F} (E(0))$. Let $\bar{\lambda}$ be a fixed point of the associated map $\Lambda$
	and $\tilde{x} (0)$ be the corresponding vector of time-to-discharge, viz.
	$\tilde{x}_j (0) = x_j(0) + \bar{ \lambda}_j \mu( \mathcal{T} \backslash \mathcal{A}_j)$.
	Denote by $\bar{\tau} = \max_{t \in \mathcal{T}} t$. Then, for any choice of $u(t)$ such that $d(t) = \sum_{j: t \in \mathcal{A}_j} u_j(t) = d(t)$ and $d(t) \in [0, \bar{P}_j)$, the solution $E(t)$ of (\ref{popequation}) fulfils:
\[ \tilde{\mathcal{F}} ( E(\bar{\tau}) ) \subseteq \tilde{\mathcal{F}} ( \Omega^{-1} \, \varphi ( \bar{\tau},\tilde{x}(0),d   ) ). \]
\end{theo}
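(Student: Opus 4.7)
The strategy is to \emph{lift} any feasible dispatch $u$ of $d(\cdot)$ for the original fleet (subject to availability) to a dispatch $\tilde{u}$ on the augmented fleet (without availability) starting from $\tilde{x}(0)$, and then to invoke the known maximum-flexibility property of the GGDDF policy in the full-availability case (\cite{ETA-pscc}).

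First, I would establish that under the GGDDF dispatch prescribed by Theorem \ref{main}, namely $u_j^\star(t) = K_j(t,\varphi(t,\tilde{x}(0),d(\cdot)),d(t))$ for $t \in \mathcal{A}_j$ and $0$ otherwise, the actual terminal energy of device $j$ equals $\bar{P}_j \varphi_j(\bar{\tau},\tilde{x}(0),d(\cdot))$. This follows by splitting the integral over $\mathcal{A}_j$ and $\mathcal{T}\setminus\mathcal{A}_j$ and using the fixed-point identity $\int_{\mathcal{T}\setminus\mathcal{A}_j} K_j\,dt = \bar{\lambda}_j \bar{P}_j \mu(\mathcal{T}\setminus\mathcal{A}_j)$, so that
\begin{equation*}
\int_{\mathcal{A}_j} K_j\,dt = \bar{P}_j[\tilde{x}_j(0) - \varphi_j(\bar{\tau})] - \bar{P}_j\bar{\lambda}_j \mu(\mathcal{T}\setminus\mathcal{A}_j) = \bar{P}_j[x_j(0) - \varphi_j(\bar{\tau})].
\end{equation*}

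Second, for an arbitrary admissible dispatch $u$ as in the statement, I would define the lifted input $\tilde{u}_j(t) := u_j(t)$ on $\mathcal{A}_j$ and $\tilde{u}_j(t) := K_j(t,\varphi(t,\tilde{x}(0),d(\cdot)),d(t))$ on $\mathcal{T}\setminus\mathcal{A}_j$, exactly as in the proof of Theorem \ref{main}. That proof already shows $\tilde{u}$ is pointwise in $[0,\bar{P}_j]$ and delivers the augmented demand $\tilde{d}$ on the augmented fleet from $\tilde{x}(0)$. The key bookkeeping observation is that the augmented terminal energy coincides with the actual terminal energy: using the same fixed-point identity,
\begin{equation*}
\bar{P}_j \tilde{x}_j(0) - \int_0^{\bar{\tau}} \tilde{u}_j\,dt = E_j(0) - \int_{\mathcal{A}_j} u_j\,dt = E_j(\bar{\tau}).
\end{equation*}

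Third, I would invoke the maximum-flexibility property of GGDDF without availability constraints (\cite{ETA-pscc}, see also \cite{ETA-tops,phtr}): among all feasible dispatches of $\tilde{d}$ for the augmented fleet from $\tilde{x}(0)$, the GGDDF policy yields the terminal energy state whose future feasibility set $\tilde{\mathcal{F}}(\cdot)$ is maximal under set inclusion. Combining this with Steps 1 and 2, the terminal energy reached by $\tilde{u}$ equals $E(\bar{\tau})$, while the terminal energy reached by GGDDF equals $\Omega^{-1}\varphi(\bar{\tau},\tilde{x}(0),d(\cdot))$, yielding the desired inclusion.

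The main obstacle will be Step 3: the result in \cite{ETA-pscc} is typically formulated as preservation/maximization of flexibility at \emph{every} time along the trajectory, and one needs to verify that, applied at the single terminal time $\bar{\tau}$, it dominates \emph{every} competing augmented dispatch $\tilde{u}$ arising from the lifting construction used here (and not just those generated by causal variants of GGDDF on $\tilde{d}$). Once this correspondence is cleanly spelled out, the remaining manipulations are direct substitutions from Steps 1 and 2.
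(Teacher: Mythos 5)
Your proposal follows essentially the same route as the paper's proof: lift the arbitrary admissible dispatch $u$ to $\tilde{u}$ on the augmented fleet via the GGDDF completion outside the availability sets, use the fixed-point identity to show $\tilde{E}(\bar{\tau})=E(\bar{\tau})$, and invoke the maximal-flexibility property of the full-availability GGDDF policy together with Lemma \ref{integratedlemma} to identify the reference state with $\Omega^{-1}\varphi(\bar{\tau},\tilde{x}(0),d(\cdot))$. The concern you raise in Step 3 is resolved exactly as you suspect — the cited optimality holds against every feasible dispatch of $\tilde{d}$ from $\tilde{x}(0)$, which is precisely how the paper applies it — so your argument is correct as written.
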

\begin{proof}
Consider any feasible policy $u(t)$, which fulfils demand $d(t)$ over 
$\mathcal{T}$. This can be made into a feasible policy $\tilde{u}$ for aggregated demand $\tilde{d}$ and on the same fleet with complemented energy and disregarding availability constraints, according to:
\[  \tilde{u}_j(t) = \left \{ \begin{array}{rl} u_j(t) & t \in \mathcal{A}_j \\ K (t, \varphi(t,\tilde{x}(0), d(\cdot)),d(t)) & t \notin \mathcal{A}_j \end{array}    \right . \]
Let $\tilde{E} (t)$ be the solution corresponding to $\tilde{u}(t)$ and 
for initial condition $\tilde{E}(0) = \Omega^{-1} \tilde{x}(0)$. 
By the optimality of GGDDF policies with respect to feasibility of future demand signals, we have
$ \tilde{\mathcal{F}} ( \tilde{E} (t) ) \subseteq 
\tilde{\mathcal{F}} (  \Omega^{-1}  \tilde{\varphi}(t, \tilde{x}(0), \tilde{d}(\cdot)) ), \, \forall \, t \in \mathcal{T}$.
In particular, for $t = \bar{\tau}$ we have,
$\tilde{E} (\bar{\tau}) = E( \bar{\tau})$ (thanks to the fixed point condition and definition of $\tilde{E}$) and therefore, by exploiting
$\tilde{\varphi}(t, \tilde{x}(0), \tilde{d}(\cdot)) ) = \varphi(t, \tilde{x}(0),d(\cdot) )$ we see that
$\tilde{\mathcal{F}} ( E (\bar{\tau}) ) \subseteq 
\tilde{\mathcal{F}} (  \Omega^{-1}  \varphi(\bar{\tau}, \tilde{x}(0), d(\cdot)) )$,
which proves the claim.
\end{proof}
Theorem \ref{optimalfeas} shows that the set of feasible power demand signals achieved by the policy (\ref{policy}) at the end of the prediction horizon $\mathcal{T}$ is maximal, with respect to set-inclusion, and in regard of any other feasible power schedule, when formulated for an idealized fleet without availability constraints. 

The notion of unserved energy is an important measure of reliability of supply. From the mathematical point of view, in the context of this paper, we may define, for each policy $u(t)$ defined over $\mathcal{T}$ the following functional:
\begin{equation}
    \label{unservedfunctional}
\mathcal{U} (u(\cdot) ) := \int_{\mathcal{T}}  \max  \Big \{ d(t) - \sum_{j \in \mathcal{N}} u_j(t), 0 \Big \} \, dt. 
\end{equation}
When a demand signal is feasible, one might find $u(\cdot)$ such that the corresponding unserved energy is zero (and this is what the discussed policies allow to do), however, for unfeasible demand signals, it is still desirable to minimize $\mathcal{U}$.
The problem can be formulated as follows:
\begin{equation}
\label{minuns}
\begin{array}{rl}
\min_{u(\cdot), E(\cdot)}& \mathcal{U} ( u(\cdot)) \\
\textrm{s.t.}\quad \dot{E}(t) &= - u(t) \quad \forall \, t \in \mathcal{T} \\
E(0) &= \Omega^{-1} x(0)  \\
0 \leq u_j(t) & \leq \bar{P}_j \quad \forall \, t \in \mathcal{T}, \; \forall \, j \in \mathcal{N} \\
E(t) & \geq 0 \quad \forall \, t \in \mathcal{T} \\ 
u_j(t) &= 0 \quad  \forall \, t \notin \mathcal{A}_j, \; \forall j  \in \mathcal{N}.
\end{array}
\end{equation} 
The solution to problem (\ref{minuns}) can be found by considering an augmented demand signal $\tilde{d}$ for a fleet with full availability and suitably augmented initial time-to-discharge $\tilde{x} (0)$.
Consider the modified cost functional:
\begin{equation}
    \label{unservedfunctional2}
\tilde{\mathcal{U} } (\tilde{u}(\cdot) ) := \int_{\mathcal{T}}  \max  \Big \{ \tilde{d}(t) - \sum_{j \in \mathcal{N}} \tilde{u}_j(t), 0 \Big \} \, dt,
\end{equation}
where $\tilde{d}$ is defined according to (\ref{auxdemand}). Its minimization disregarding partial availability constraints can be formulated as: 
\begin{equation}
\label{minuns2}
\begin{array}{rl}
\min_{\tilde{u}(\cdot), \tilde{E}(\cdot)}& \mathcal{U} ( \tilde{u}(\cdot)) \\
\textrm{s.t.}\quad \dot{\tilde{E}}(t) &= - \tilde{u}(t) \quad \forall \, t \in \mathcal{T} \\
\tilde{E}(0) &= \Omega^{-1} \tilde{x}(0)  \\
0 \leq \tilde{u}_j(t) & \leq \bar{P}_j \quad \forall \, t \in \mathcal{T}, \; \forall \, j \in \mathcal{N} \\
\tilde{E}(t) & \geq 0 \quad \forall \, t \in \mathcal{T}.
\end{array}
\end{equation} 
Previous results (see \cite{ETA-tops}) show that Problem (\ref{minuns2}) can be solved through a GGDDF policy. Our main result in this respect is to connect the optimal solution of (\ref{minuns}) to that of (\ref{minuns2}).
\begin{theo}
\label{minimumenergyth}
Consider the optimisation problems in (\ref{minuns}) and (\ref{minuns2}) where
the augmented demand $\tilde{d}$ is defined according to (\ref{auxdemand}) and 
the initial condition $\tilde{x}(0)$ fulfils  $\tilde{x}_j (0) = x_j(0) + \bar{ \lambda}_j \mu( \mathcal{T} \backslash \mathcal{A}_j)$ for some fixed point $\bar{\lambda}$ of $\Lambda(\cdot)$.
Then, the minimum value of unserved energy for (\ref{minuns}) equals the minimum of
(\ref{minuns2}).
\end{theo}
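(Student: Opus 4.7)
The plan is to establish the two inequalities $\tilde{\mathcal{U}}^{\star} \leq \mathcal{U}^{\star}$ and $\mathcal{U}^{\star} \leq \tilde{\mathcal{U}}^{\star}$ between the optima of (\ref{minuns}) and (\ref{minuns2}), by exhibiting in each direction an admissible policy for the other problem with matching unserved energy. Both constructions mirror the reductions used in the proof of Theorem \ref{main}, with the fixed-point relation $\Lambda(\bar{\lambda}) = \bar{\lambda}$ doing the accounting that glues the two formulations together.

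For the first inequality, I would pick any feasible $u$ for (\ref{minuns}) and define
\[
\tilde{u}_j(t) := \begin{cases}
u_j(t), & t \in \mathcal{A}_j, \\
K_j(t, \varphi(t, \tilde{x}(0), d(\cdot)), d(t)), & t \notin \mathcal{A}_j.
\end{cases}
\]
The pointwise rate bounds are immediate and, splitting $\int_0^t \tilde{u}_j(\tau)\,d\tau$ across $\mathcal{A}_j$ and its complement, one piece is bounded by $E_j(0)$ through the feasibility of $u$ and the other by $\bar{P}_j \bar{\lambda}_j \mu(\mathcal{T} \setminus \mathcal{A}_j)$ through the fixed-point identity, together yielding $\int_0^t \tilde{u}_j \leq \tilde{E}_j(0)$. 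By definition (\ref{auxdemand}) of $\tilde{d}$ one obtains the pointwise identity $\tilde{d}(t) - \sum_j \tilde{u}_j(t) = d(t) - \sum_j u_j(t)$, so $\tilde{\mathcal{U}}(\tilde{u}) = \mathcal{U}(u)$ and hence $\tilde{\mathcal{U}}^{\star} \leq \mathcal{U}^{\star}$.

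For the reverse inequality I would invoke the optimality of GGDDF without availability constraints for minimum unserved energy established in \cite{ETA-tops}: an optimal $\tilde{u}^{\star}$ for (\ref{minuns2}) is obtained by applying GGDDF to $\tilde{d}$ starting from $\tilde{x}(0)$, which by Lemma \ref{integratedlemma} coincides with $K_j(t, \varphi(t, \tilde{x}(0), d(\cdot)), d(t))$ along the shared trajectory. Setting $u_j(t) := \tilde{u}^{\star}_j(t) \mathbf{1}_{\mathcal{A}_j}(t)$, availability and rate constraints become automatic, while the energy bound $\int_{\mathcal{A}_j} K_j \leq E_j(0)$ follows by combining the feasibility of $\tilde{u}^{\star}$ in (\ref{minuns2}) (so $\int_{\mathcal{T}} K_j \leq \tilde{E}_j(0)$) with the fixed-point identity $\int_{\mathcal{T} \setminus \mathcal{A}_j} K_j = \bar{P}_j \bar{\lambda}_j \mu(\mathcal{T} \setminus \mathcal{A}_j)$. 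The algebraic identity already used in the first direction gives $d(t) - \sum_j u_j(t) = \tilde{d}(t) - \sum_j \tilde{u}^{\star}_j(t)$, so $\mathcal{U}(u) = \tilde{\mathcal{U}}(\tilde{u}^{\star}) = \tilde{\mathcal{U}}^{\star}$, closing the argument.

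The main obstacle is the infeasibility regime: when $\tilde{d}$ cannot be served losslessly from $\tilde{x}(0)$, the unclipped Filippov trajectory $\varphi(\cdot, \tilde{x}(0), d(\cdot))$ exits the non-negative orthant, the ideal policy $K$ is no longer a feasible choice for (\ref{minuns2}), and the minimizer guaranteed by \cite{ETA-tops} must be the clipped GGDDF variant that halts extraction on empty devices. Preserving the fixed-point bookkeeping $\int_{\mathcal{T} \setminus \mathcal{A}_j} \tilde{u}^{\star}_j = \bar{P}_j \bar{\lambda}_j \mu(\mathcal{T} \setminus \mathcal{A}_j)$ under clipping—so that $\int_{\mathcal{A}_j} \tilde{u}^{\star}_j \leq E_j(0)$ continues to hold—is the delicate technical hurdle; the natural fix is to redefine $\Lambda$ on the clipped dynamics and reapply Brouwer's theorem to obtain a (possibly different) fixed point that still supports the decomposition. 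Should this prove cumbersome, a cleaner backup is to combine Theorem \ref{necandsufficient} with its no-availability analogue and match the two minima directly through their common worst-case window deficit, using $\mu(\mathcal{A}_j \cap \mathcal{W}) = \mu(\mathcal{W}) - \mu(\mathcal{W} \setminus \mathcal{A}_j)$.
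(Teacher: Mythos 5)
Your first inequality ($\tilde{\mathcal{U}}^{\star} \leq \mathcal{U}^{\star}$) is sound and mirrors the paper's own construction: the parameterization of $\tilde{u}$ by $u$ on $\mathcal{A}_j$ and by $K_j(t,\varphi(t,\tilde{x}(0),d(\cdot)),d(t))$ off it, the energy bound via the fixed-point identity, and the pointwise cancellation $\tilde{d}(t)-\sum_j\tilde{u}_j(t)=d(t)-\sum_j u_j(t)$ all appear in the paper's argument.

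The gap is exactly where you flag it, and it is not a removable technicality of your route. In the only regime where the theorem has content ($d$ infeasible, hence $\tilde d$ infeasible from $\tilde x(0)$ by Theorem \ref{main}), the minimizer of (\ref{minuns2}) must respect the hard constraint $\tilde E(t)\geq 0$ and therefore cannot coincide off the availability sets with the unclipped $K_j(t,\varphi(t,\tilde x(0),d(\cdot)),d(t))$, whose underlying trajectory exits the nonnegative orthant. Once clipping occurs, the identity $\int_{\mathcal{T}\setminus\mathcal{A}_j}\tilde u^{\star}_j\,dt=\bar P_j\bar\lambda_j\mu(\mathcal{T}\setminus\mathcal{A}_j)$ is lost and with it your bound $\int_{\mathcal{A}_j}\tilde u^{\star}_j\,dt\leq E_j(0)$. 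Your proposed repair (a new fixed point for a $\Lambda$ built on clipped dynamics) would change the object the theorem is about, since the statement pins $\bar\lambda$ to a fixed point of the $\Lambda$ defined through the unclipped flow; your backup via Theorem \ref{necandsufficient} is only a sketch. The paper sidesteps the issue by a different move: it first rewrites (\ref{minuns}) and (\ref{minuns2}) as the equivalent problems (\ref{minuns3}) and (\ref{minuns4}), which \emph{drop} the state-positivity constraints, force exact delivery of $d$ (resp.\ $\tilde d$), and instead charge the terminal deficit $-\sum_j\min\{E_j(\bar\tau),0\}$ (resp.\ $-\sum_j\min\{\tilde E_j(\bar\tau),0\}$) in the cost. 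In that relaxed setting the unclipped GGDDF input is admissible and optimal, it lies in the restricted class (\ref{tildeupara}), the fixed-point bookkeeping holds exactly, and the terminal energies satisfy $E(\bar\tau)=\tilde E(\bar\tau)$, which yields equality of the two optima in one stroke. To complete your proof you would need either to adopt that relaxation or to show that an optimizer of (\ref{minuns2}) under $\tilde E\geq 0$ can always be chosen with the prescribed off-availability component --- which is precisely the step you have not established.
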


\begin{proof}
As a first step it is useful to remark that minimising unserved energy can be equivalently formulated as:
\begin{equation}
\label{minuns3}
\begin{array}{rl}
\min_{u(\cdot), E(\cdot)} & - \sum_{j \in \mathcal{N}}  \min \{ E(\bar{\tau}),0 \} \\
\textrm{s.t.}\quad \dot{E}(t) &= - u(t) \quad \forall \, t \in \mathcal{T} \\
E(0) &= \Omega^{-1} x(0)  \\
0 \leq u_j(t) & \leq \bar{P}_j \quad \forall \, t \in \mathcal{T}, \; \forall \, j \in \mathcal{N} \\
u_j(t) &= 0  \quad\forall \, t \notin \mathcal{A}_j,  j \in \mathcal{N}, \\
d(t)& =\sum_{j \in \mathcal{N}} u_j(t)
\end{array}
\end{equation} 
where $[0,\bar{\tau}]=\mathcal{T}$.
Similarly, for fleets without partial availability constraints, problem (\ref{minuns2}) is equivalently written as:
\begin{equation}
\label{minuns4}
\begin{array}{rl}
\min_{\tilde{u}(\cdot), \tilde{E}(\cdot)}& - \sum_{j \in \mathcal{N}} \min  \{  \tilde{E}_j ( \bar{\tau}) ,0 \}  \\
\textrm{s.t.} \quad \dot{\tilde{E}}(t) &= - \tilde{u}(t) \quad \forall \, t \in \mathcal{T} \\
\tilde{E}(0) &= \Omega^{-1} \tilde{x}(0)  \\
0 \leq \tilde{u}_j(t) & \leq \bar{P}_j \quad \forall \, t \in \mathcal{T}, \; \forall \, j \in \mathcal{N}. \\
\tilde{d}(t)& =\sum_{j \in \mathcal{N}} \tilde{u}_j(t)
\end{array}
\end{equation} 
Notice that (\ref{minuns3}) and (\ref{minuns4}) relax the positivity constraint on $E$ and $\tilde{E}$ respectively. In its place, an equality constraint is added, forcing the total power delivered to meet aggregate demand. In this way, input signals that push charge levels to negative values are still regarded as feasible, but their impact is accounted for in the cost functional. 
The result is proved through a series of inequality.
Consider the subset of signals  $\tilde{u}$ achieved through the following construction:
\begin{equation}
    \label{tildeupara}
    \tilde{u}_j (t) = \left \{ \begin{array}{rl} u_j(t) & \textrm{if } t \in \mathcal{A}_j \\ K_j ( t, \varphi(t,\tilde{x}(0),d(\cdot)), d(t) ) & \textrm{if } t \notin \mathcal{A}_j. \end{array} \right. 
    \end{equation}
where $u$ is the new decision variable.
The minimum value of (\ref{minuns3}) (with respect to $u$) over this restricted class of signals is greater or equal to the minimum value of unserved energy in (\ref{minuns2}). Furthermore, since the GGDDF policy is optimal for (\ref{minuns2})
and results in an optimal input policy $\tilde{u}_j^*$ which fulfils
$\tilde{u}_j^*(t)=  K_j ( t, \varphi(t,\tilde{x}(0),d(\cdot)), d(t) )$ for $t \notin \mathcal{A}_j$, the minimum of (\ref{minuns3}) with respect to the restricted class of signals is equal to the minimum value of unserved energy in (\ref{minuns2}).
On the other hand, because of the fixed point condition, the amount of energy delivered by any $\tilde{u}$ parameterized according to (\ref{tildeupara}) outside the availability windows exactly matches the extra energy allowed to devices at time $0$. Hence,
$E ( \bar{\tau} ) = \tilde{E} (\bar{\tau})$, when $E(t)$ is discharged according to equation $\dot{E}=-u$, and $u_j(t)=0$ for $t \notin \mathcal{A}_j$.
Hence the minimum value of (\ref{minuns3}) over the restricted class of $\tilde{u}$ signals equals the minimum value of (\ref{minuns4}).
\end{proof}

When avoiding energy curtailment is to be prioritized, another functional of interest is the so called \emph{time-to-failure}. 
For each dispatch policy $u$ of aggregate demand $d(\cdot)$ this is defined as the first time some battery state-of-charge becomes negative.
Formally,
$\mathcal{T}_f ( x(\cdot) ) := \inf \{ t \in \mathcal{T}: \min_{j \in \mathcal{N}} x_j(t) < 0 \}$.

Our aim is to solve the following optimisation problem:
\begin{equation}
\label{timefailmax}
\begin{array}{rl}
\tau^{*}:= \max_{u(\cdot),x(\cdot)} & \mathcal{T}_f (x(\cdot)) \\
\textrm{s.t.} \quad x(0) &= x_0 \\
\dot{x}(t) &= - \Omega u(t), \quad \forall \, t \in \mathcal{T} \\
0 \leq u_j(t) & \leq \bar{P}_j, \quad \forall \, t \in \mathcal{T}, \forall \, j \in \mathcal{N} \\
u_j(t) & = 0 \quad \forall \, t \notin \mathcal{A}_j, \, \forall \, j \in \mathcal{N} \\
d(t)  & = \sum_{j \in \mathcal{N}} u_j(t). 
\end{array}
\end{equation}

For fleets without availability constraints this can be maximized through the use of a GGDDF policy. However, it is shown in the following Section that the naive application of this same policy does not achieve maximisation of  time-to-failure. 
We propose an iterative procedure to compute the optimal time-to-failure and the associated dispatch.
\begin{enumerate}
\item Let $\mathcal{T}=[0,\tau_0]$; Let $k=0$;
\item Repeat:
\begin{itemize}
    \item Compute $\bar{\lambda}_k$, fixed point of $\Lambda$, over window
    $[0,\tau_k]$;
    \item Apply GGDDF policy from $\tilde{x}(0)$. Let $\tilde{x}_k$ be corresponding state evolution
    \item $\tau_{k+1} := \mathcal{T}_f ( \tilde{x}_k )$; increase $k$;
\end{itemize} 
\end{enumerate}
Our main result is the following:
\begin{theo}
\label{timefailuremax}
The iteration defined above converges to the optimal time-to-failure, i.e., $\lim_{k \rightarrow + \infty} \tau_k = \tau^*$, as in (\ref{timefailmax}).
\end{theo}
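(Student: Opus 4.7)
The plan is to establish three claims which together yield $\tau_k \to \tau^*$: (a) monotonicity, $\tau_{k+1}\le\tau_k$; (b) a lower bound, $\tau_{k+1}\ge\tau^*$; and (c) that the limit cannot strictly exceed $\tau^*$. Claim (a) is almost immediate from the construction: since $\tilde{x}_k(\cdot)$ is defined only over $[0,\tau_k]$, its first-failure time $\tau_{k+1}=\mathcal{T}_f(\tilde{x}_k)$ satisfies $\tau_{k+1}\le\tau_k$, with equality exactly when no component of $\tilde{x}_k$ hits zero on $[0,\tau_k]$; in that case the restricted policy (\ref{tobediscussed}) produces a feasible dispatch of $d$ on $[0,\tau_k]$ and, by definition of $\tau^*$, one has $\tau_k\le\tau^*$, so the iteration has stalled at or below the optimum.

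The core of the argument is claim (b). Fix an optimal policy $u^*$ attaining time-to-failure $\tau^*$, so that $d$ restricted to $[0,\tau^*]$ belongs to $\mathcal{F}(E(0))$. By Theorem \ref{main}, there exists a fixed point $\bar{\lambda}^*$ of the map $\Lambda$ computed on the shorter window $[0,\tau^*]$ such that the corresponding GGDDF trajectory $\varphi(t,\tilde{x}^*(0),d(\cdot))$ remains non-negative on $[0,\tau^*]$. Appealing to the monotonicity of the flow $\varphi$ with respect to its initial condition, it suffices to establish the componentwise inequality $\tilde{x}_k(0)\succeq\tilde{x}^*(0)$: this would yield $\varphi(t,\tilde{x}_k(0),d(\cdot))\succeq\varphi(t,\tilde{x}^*(0),d(\cdot))\succeq 0$ on $[0,\tau^*]$ and hence $\tau_{k+1}\ge\tau^*$. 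The main obstacle is precisely this comparison of fixed points across windows of different lengths: one must show that the quantity $\bar{\lambda}_j\,\mu([0,\tau]\setminus\mathcal{A}_j)$ is non-decreasing in $\tau$. I would tackle this constructively, iterating $\Lambda$ from $\lambda=0$ and exploiting that enlarging the window enlarges the integration domain $\mathcal{T}\setminus\mathcal{A}_j$ in (\ref{outsideenergy}); combined with the monotonicity of $\varphi$ in the initial condition, this should produce a componentwise non-decreasing sequence of iterates whose limit inherits the desired monotonicity in $\tau$.

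For claim (c), combining (a) and (b) yields convergence to some $\tau_\infty\in[\tau^*,\tau_0]$. Continuity of $\Lambda$ together with weak contractivity of $\varphi$ imply continuity of the composite map $\tau\mapsto\mathcal{T}_f$ associated to the fixed-point-based GGDDF on window $[0,\tau]$, so passing to the limit in $\tau_{k+1}=\mathcal{T}_f(\tilde{x}_k)$ yields a self-consistent relation at $\tau_\infty$: the GGDDF on $[0,\tau_\infty]$ does not fail strictly before $\tau_\infty$. By Theorem \ref{main}, this amounts to $d$ being feasible on $[0,\tau_\infty-\varepsilon]$ for every $\varepsilon>0$, hence $\tau_\infty\le\tau^*$. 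Together with the lower bound from (b), this gives $\tau_\infty=\tau^*$.
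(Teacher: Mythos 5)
Your steps (a) and (c) are broadly aligned with the paper's strategy, but the core of your argument, claim (b), has a genuine gap. You reduce the lower bound $\tau_{k+1}\ge\tau^*$ to a componentwise comparison $\tilde{x}_k(0)\succeq\tilde{x}^*(0)$ between augmented initial conditions built from fixed points of $\Lambda$ on two \emph{different} windows, and you propose to obtain it by showing that $\bar{\lambda}_j\,\mu([0,\tau]\setminus\mathcal{A}_j)$ is non-decreasing in $\tau$ via iteration of $\Lambda$ from $\lambda=0$. This does not go through as stated: fixed points of $\Lambda$ are only guaranteed to exist (Brouwer), not to be unique, so ``the'' fixed point on $[0,\tau^*]$ and the one the algorithm computes on $[0,\tau_k]$ need not be comparable; and the proposed iteration would require $\Lambda$ to be order-preserving in $\lambda$, which fails in general --- increasing $\lambda_i$ raises device $i$'s time-to-discharge and hence its priority under GGDDF, which can \emph{decrease} the energy device $j$ delivers outside its window, i.e.\ decrease $\Lambda_j$. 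The paper's proof of the lower bound avoids any cross-window comparison of fixed points. It argues by induction that $\tau^*\le\tau_k$, and at step $k$ uses only the fixed point $\bar{\lambda}_k$ on $[0,\tau_k]$: any availability-constrained policy $u$ is lifted to an unconstrained policy $\tilde{u}$ for $\tilde{d}_k$ (equal to $K_j$ outside $\mathcal{A}_j$); the fixed-point property makes the out-of-window energy exactly equal to the initial augmentation, so the lifted trajectory dominates the constrained one, $\tilde{x}(t)\succeq x(t)$, and optimality of GGDDF for the unconstrained time-to-failure problem on $[0,\tau_k]$ then yields $\tau_{k+1}=\mathcal{T}_f(\tilde{x}_k)\ge\tau^*$ directly.

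A secondary issue is in claim (c): you invoke continuity of the map $\tau\mapsto\mathcal{T}_f$ associated to the fixed-point-based GGDDF on $[0,\tau]$, but this presupposes a continuous selection of fixed points, which is again not available. The paper instead extracts a convergent subsequence of the bounded initial conditions $\tilde{x}_{k_n}(0)$, passes to the limit to obtain a fixed point on $[0,\bar{\tau}]$ whose trajectory is non-negative there, and applies Theorem \ref{main} to conclude that $d$ is feasible on $[0,\bar{\tau}]$, hence $\tau^*\ge\bar{\tau}$. Your endgame should be repaired in the same way.
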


\section{Numerical Examples}

In this section, the proposed dispatch algorithm for heterogeneous devices is applied to some numerical case studies. The computational tasks were implemented using MATLAB R2019a and solved by the routine \textit{fsolve} or \textit{fmincon}, on a computer with 2-core 3.50GHz Intel(R) Xeon(R) E5-1650 processor and 32GB RAM.

\subsection{Feedback policy}\label{example:policy}

Next, the proposed feedback policy is tested on a fleet of $500$ devices. A time interval of $24$ hours, from $12:00$ h to $12:00$ h of the next day, is considered. For every device $j \in \mathcal{N} = \{1, 2, \cdots, 500\}$, the rated power $\bar{P}_{j}$ is set to the same value $\bar{P}_{j} = 1$ KW. The initial time-to-discharge $x_{j}(0)$ (initial energy $E_{j}(0)$) follows a normal distribution with mean $\mu_{E} = 8$ kWh and standard deviation $\sigma_{E} = 1.5$ kWh. It is assumed that each device can discharge only within a continuous time interval $[t_{j}, t_{j} + d_{j}]$ h, where $t_{j}$ and $d_{j}$ follow normal distributions, with the following mean and standard deviation:
$\mu_{t} = 18:00 \,\,\text{h}, \,\, \sigma_{t} = 1 \,\,\text{h}, \,\, \mu_{d} = 10 \,\,\text{h}, \,\, \sigma_{d} = 2 \,\,\text{h}$.

To provide a clearer demonstration, the boundaries of availability windows are chosen as integers and we consider piecewise constant aggregate demand signals with integer switching time instants. The calculation of fixed $\bar{\lambda}$ has been completed in about $30$ minutes. Table \ref{table:computation} presents how the computational time for $\bar{\lambda}$ changes with respect to the number of devices in the fleet. It is observed that the time to obtained the fixed $\bar{\lambda}$ is growing approximately as $N^4$, thus exhibiting good scalability properties. When the proposed approach is adopted into the receding horizon framework, the convergence of $\bar{\lambda}$ will be faster because a warm start is available if there is no significant mismatch between the realisation and prediction. Moreover, in real-world applications, more powerful machines can further reduce the computational time and easily perform the algorithm on larger number of devices.

\begin{table}[H]
\vspace{-3mm}
\centering
\caption{Computational complexity of solving the fixed $\bar{\lambda}$}
\label{table:computation}
\vspace{-2mm}
\begin{tabular}{@{}cccccc@{}}
\toprule
\multicolumn{1}{c}{Device number} & 10  & 50 & 100 & 250 & 500\\ 
\midrule
\multicolumn{1}{c|}{Computation time (min)}  & $1\times10^{-4}$  & 0.002 & 0.05 & 1.51 & 30.13 \\ 
\bottomrule
\end{tabular}
\end{table}

Fig. \ref{Availability_Demand_Feasible} shows the aggregate availability, viz. $\sum_{j:t \in \mathcal{A_j}} \bar{P}_j$ and required demand profile. The demand and availability profiles satisfy all the necessary conditions discussed in the conference paper \cite{cdcbatteries}, and this demand is also feasible since the auxiliary time-to-discharge of all devices, in Fig. \ref{Time_to_Discharge_Feasible}, are non-negative.

\begin{figure}[!ht]
\centerline{\includegraphics[width=0.4\textwidth]{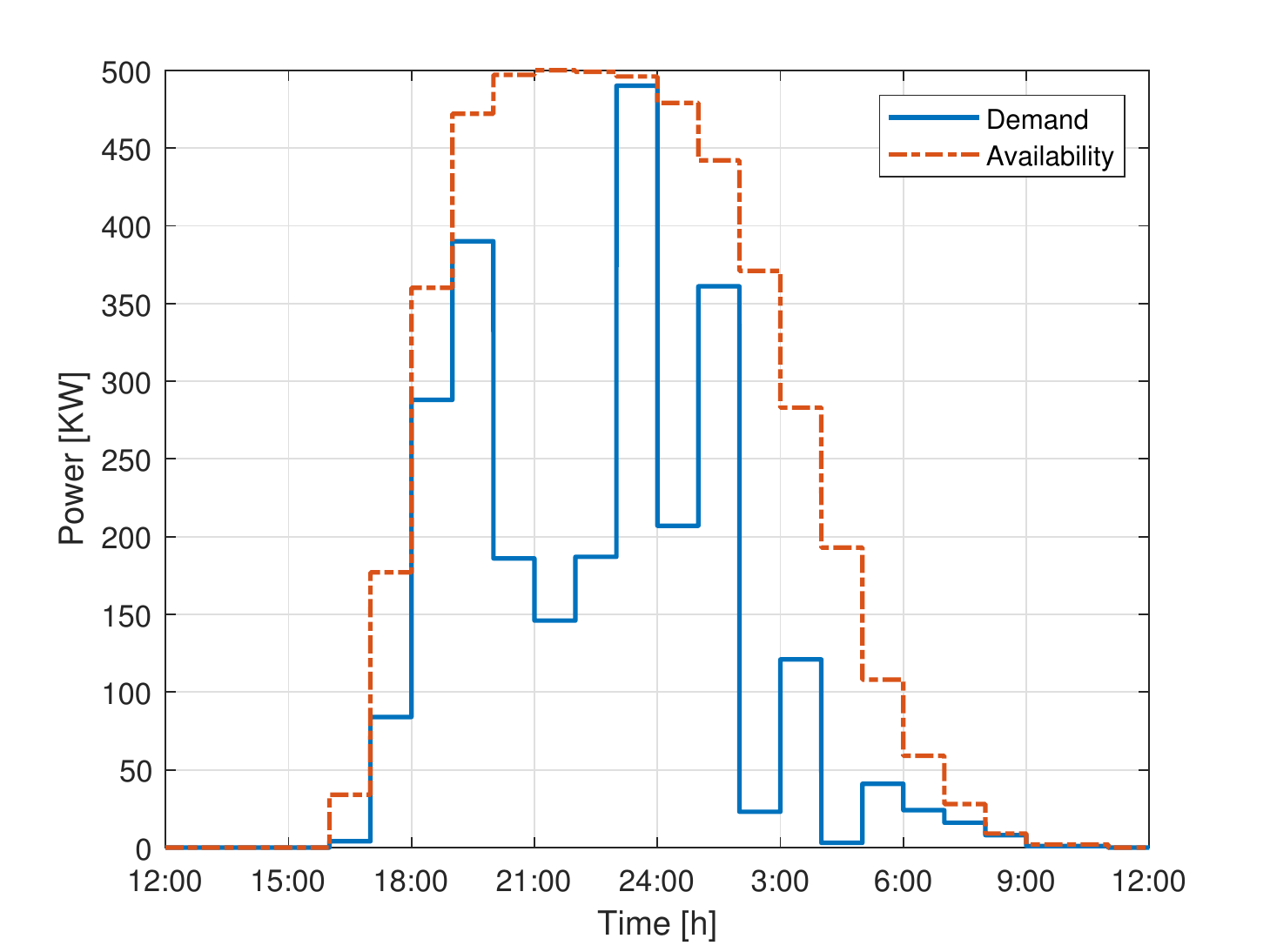}}
\vspace{-3mm}
\caption{Available aggregate power and a feasible demand profile.}
\label{Availability_Demand_Feasible}
\end{figure}

\begin{figure}[!ht]
\vspace{-5mm}
\centerline{\includegraphics[width=0.4\textwidth]{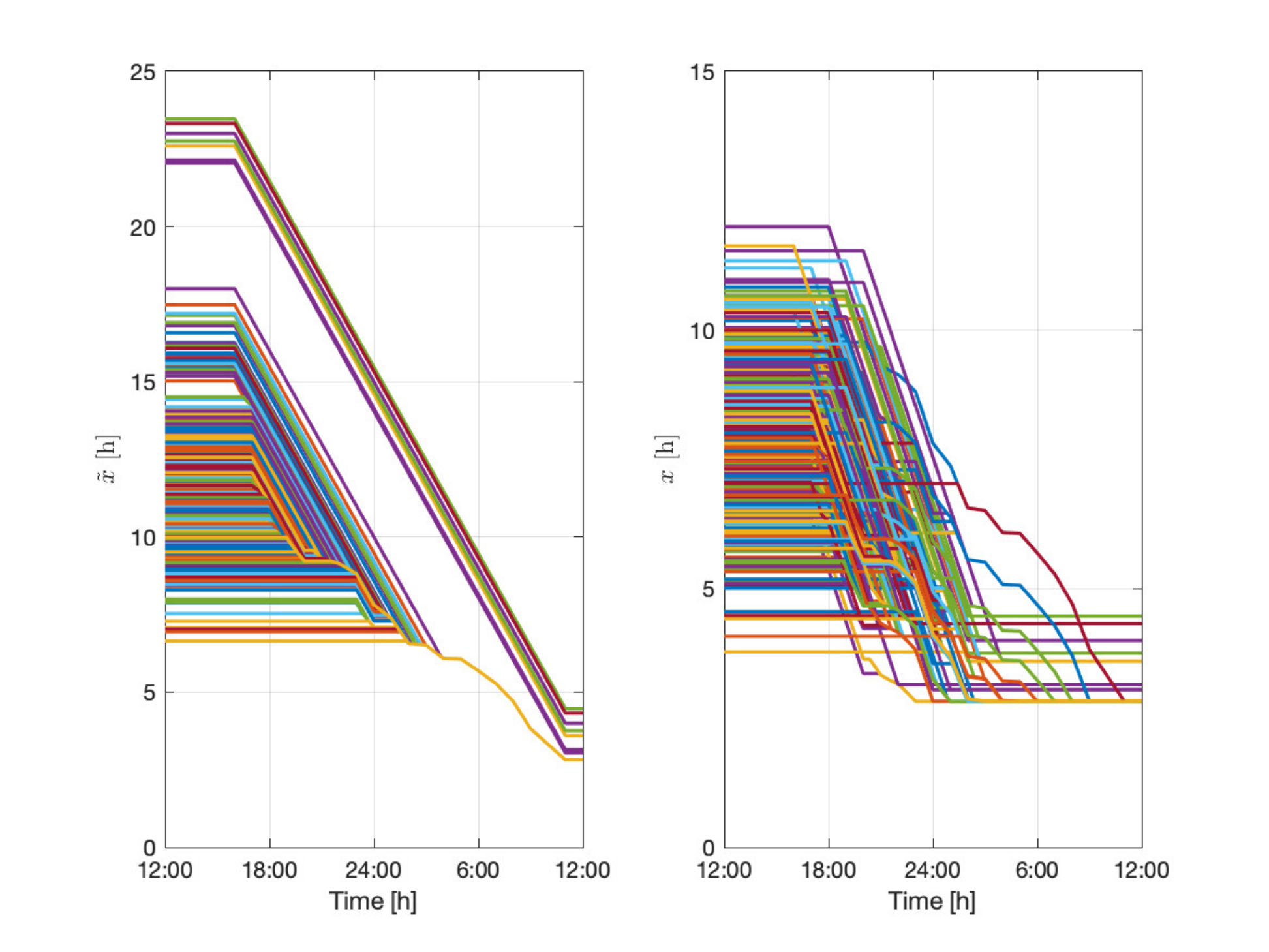}}
\vspace{-3mm}
\caption{Time-to-discharge for feasible demand profile with/without augmented energy.}
\label{Time_to_Discharge_Feasible}
\end{figure} 


Some representative examples of individual power dispatch signals are shown in Fig. \ref{Storage_Feasible}. Their availability is displayed with a green area and individual discharging power profiles are decided according to the ranking of auxiliary time-to-discharge which is presented in Fig. \ref{Time_to_Discharge_Feasible}. When the demand $d(t)=0$ before $16:00$ h and after $11:00$ h, all devices have a constant time-to-discharge. If the demand is positive, higher time-to-discharge devices are prioritized for discharging with $\bar{P}$, medium time-to-discharge devices are discharging at a fraction of their rated power, and lower time-to-discharge devices are controlled to preserve energy for later use.

\begin{figure}[!ht]
\centerline{\includegraphics[width=0.4\textwidth]{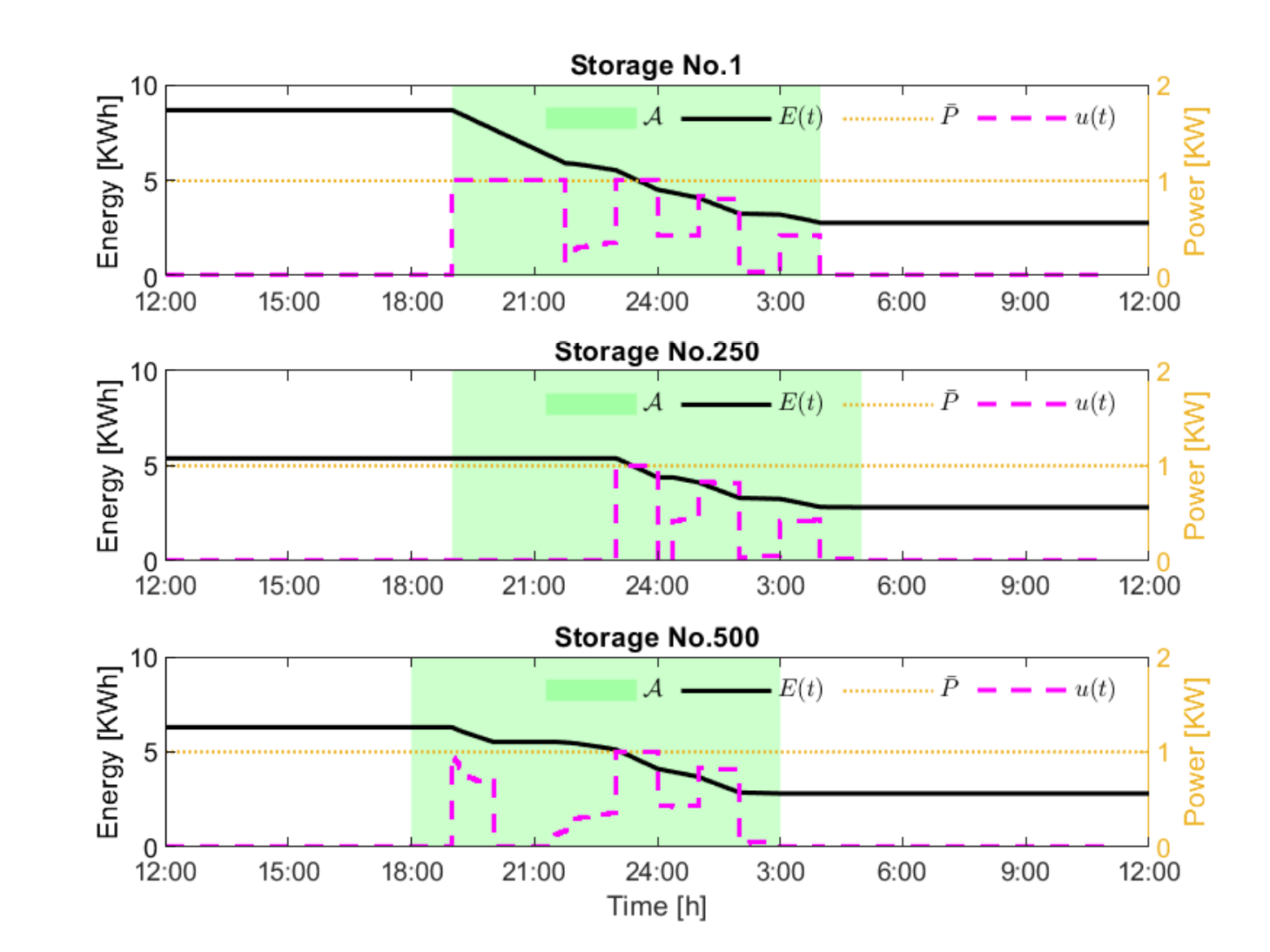}}
\vspace{-3mm}
\caption{Energy profiles $E(t)$ for different storage devices (subscript $i$ is neglected), together with availability interval $\mathcal{A}$, discharging profiles $u(t)$.}
\label{Storage_Feasible}
\end{figure}


\subsection{Minimum unserved energy vs. maximum time-to-failure}

We consider a smaller number of devices $N = 20$ with parameters following the same probability density functions as in Section \ref{example:policy}. The maximum aggregate availability of these devices and the required demand profile are shown in Fig. \ref{Availability_Demand_Infeasible}. Numerically, the aggregate initial energy is $\sum_{j \in \mathcal{N}} E_{j}(0) = 163.25$ kWh and the total energy by the demand profile is $\int_{\mathcal{T}} d(t) dt = 168.05$ kWh. As a result, this demand profile is unfeasible due to insufficient total energy, which leads to the consequence that some devices finish the discharging task with negative time-to-discharge according to the policy \eqref{policy}.

Fig. \ref{State_MinUE_vs_MaxT2F} demonstrates the auxiliary and actual time-to-discharge using two dispatch policies. Over the $24$ hours full time window, the GGDDF policy serves total energy $151.40$ kWh, hence, the minimum unserved energy is $\mathcal{U}(\cdot) = 16.65$ kWh. Nevertheless, this policy has a relatively short time-to-failure, since one device crosses into negative time-to-discharge at $24:30$ h on the left-bottom subplot of Fig. \ref{State_MinUE_vs_MaxT2F}. 

We compare the above scheduling with the policy resulting from the iteration algorithm corresponding to Theorem \ref{timefailuremax}.
This identifies the discharging schedules for all devices up to around $4:15$ h, which is the maximum time-to-failure. This is significantly larger (approx. $3:45$ h) than what is achieved by the previous dispatch. Notice that the total energy served before this time instant is $149.69$ kWh. 
\comment{
Furthermore, as shown in the bottom of Fig. \ref{RealState_MinUE_vs_MaxT2F}, there are still five devices with positive energy at $4:15$ h. To analyze the amount of energy these non-empty devices can contribute to the demand requirement after $4:15$ h, one can see that only one device with instantaneous energy $1.71$ kWh is available for discharging afterwards, as depicted in Fig. \ref{DischargingAvailableDevice}. The time duration until its end of availability window is $1.75$ hours which is sufficient for this device to release all its energy. Accordingly, the total energy all devices can provide after $4:15$ h is $1.71$ kWh, which implies that maximum energy following the second dispatch policy is $151.40$ kWh over the full time window. Therefore, the schedule provided by the maximum time-to-failure policy can be completed to a schedule achieving minimum unserved energy over the full window $\mathcal{T}$. 
We conjecture this might be true in general, but we leave it as an open question for future research. The converse is instead not true, as clearly shown by}
The considered example exhibits a significant gap between the time-to-failure of the original minimum unserved energy schedule and the maximum time-to-failure achievable. 
This is in contrast to the case of full availability where the GGDDF policy achieve both maximum time-to-failure and minimum unserved energy.

\begin{figure}[!ht]
\centerline{\includegraphics[width=0.4\textwidth]{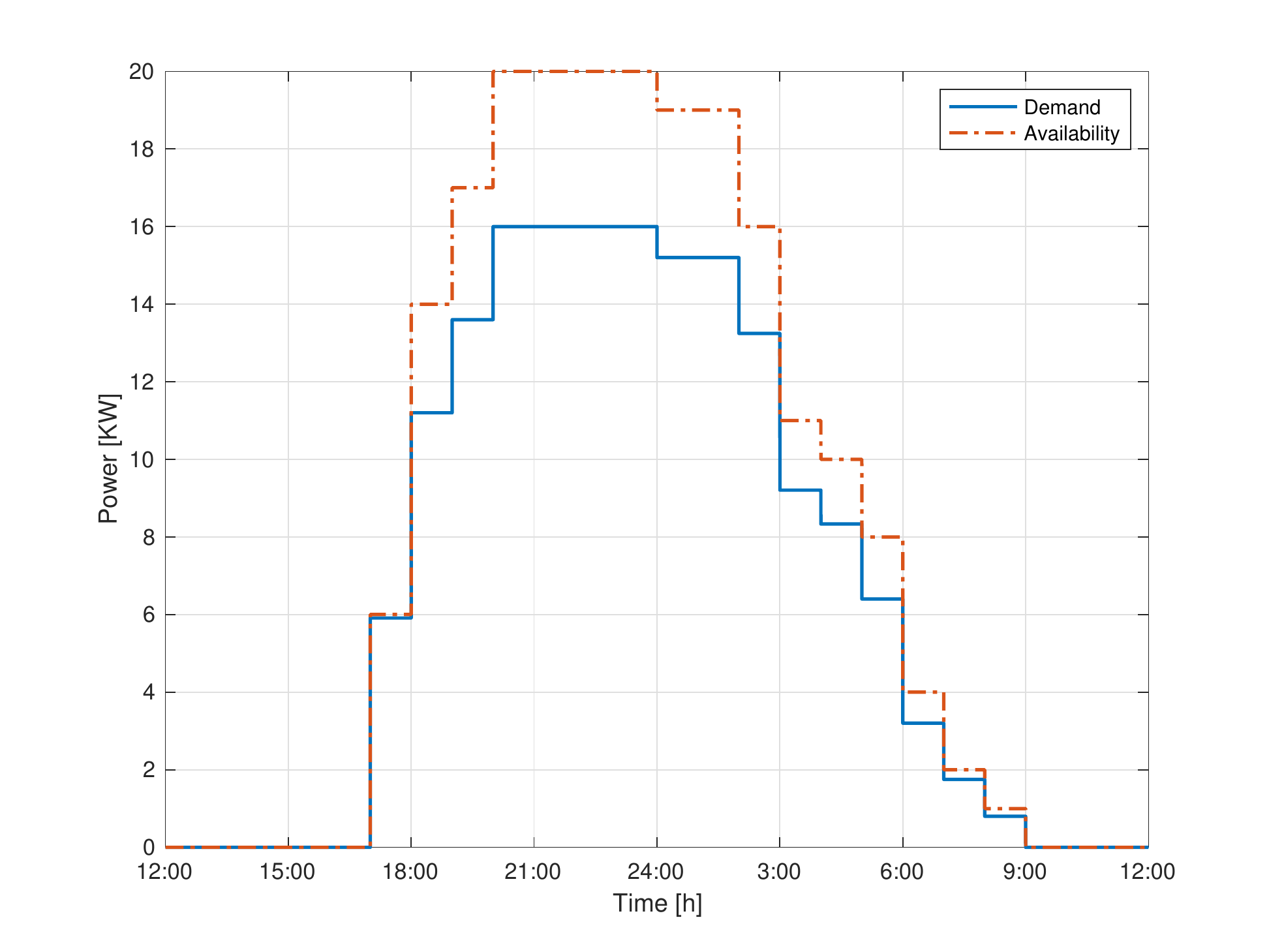}}
\vspace{-3mm}
\caption{Available aggregate power and an unfeasible demand profile.}
\label{Availability_Demand_Infeasible}
\end{figure}

\begin{figure}[!ht]
\centerline{\includegraphics[width=0.4\textwidth]{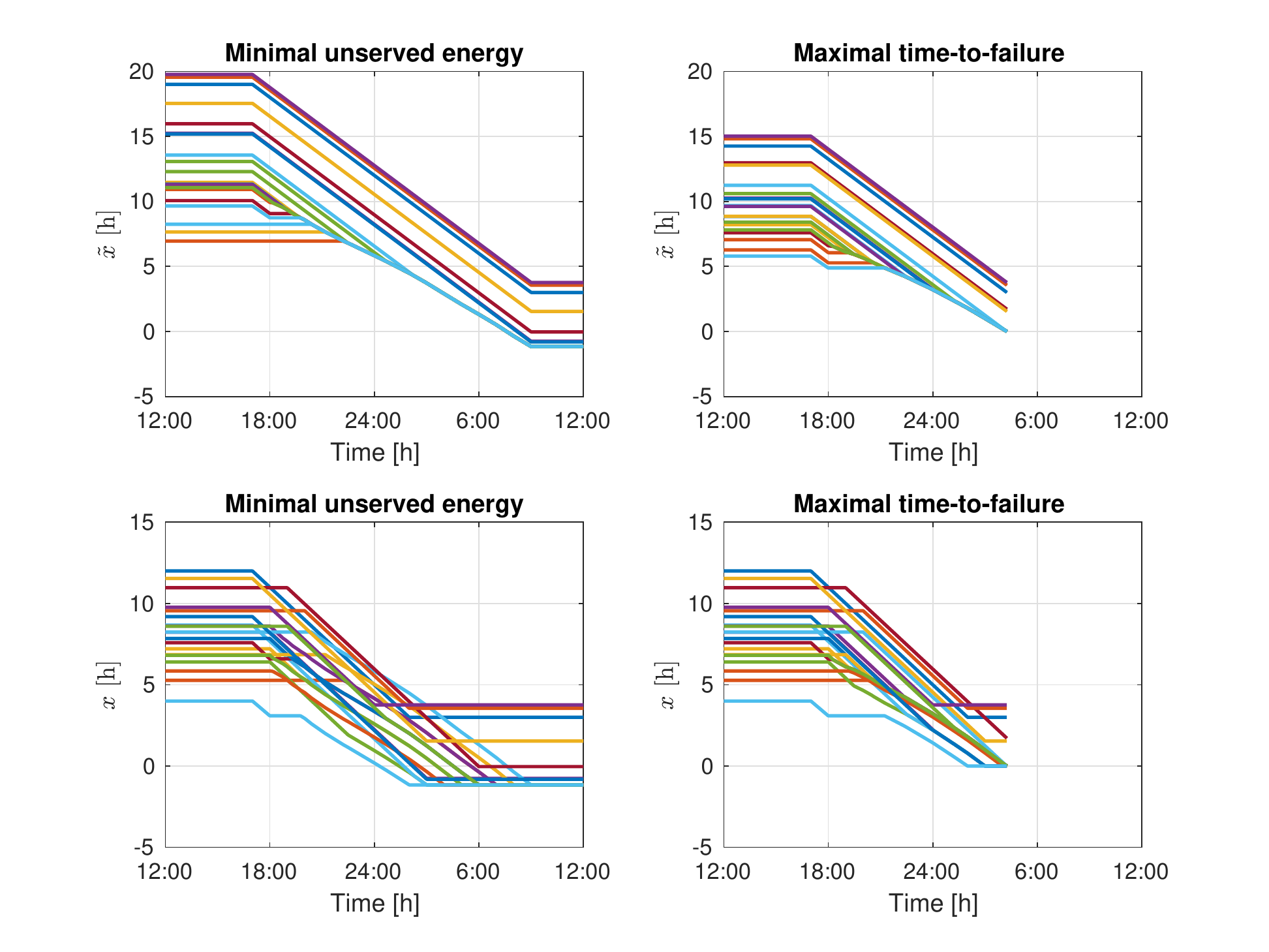}}
\vspace{-3mm}
\caption{Time-to-discharge for the unfeasible demand profile.}
\label{State_MinUE_vs_MaxT2F}
\end{figure}






\section{Conclusions}
This paper solves the optimal dispatch problem for heterogeneous fleets of storage devices, subject to partial availability constraints
and without cross-charging. This significantly extends previous known results which were limited to fleets with full availability, \cite{ETA-pscc,ETA-tosg,ETA-tops,zachary}.
The approach transforms the problem to one of dispatch of a fleet over the same time-horizon but in the absence of availability constraints, for some auxiliary and increased demand signal and correspondingly increased initial energies (or discharge time).
Admissible dispatch policies are provided whenever the aggregate demand signal is feasible and policies maximizing time-to-failure or minimizing unserved energy are formulated and discussed, for the case of unfeasible demand.

A characterization of the feasible set of aggregate demand signals is presented, which may serve as an effective computational approach to embed flexible demand or coordinated fleet operation during outages in large scale optimisation problems, by exactly capturing the degrees of freedom afforded by the fleet without explicit mention of individual power schedules. 
Finally, examples of application of the techniques are provided for fleets of medium and large size, to demonstrate the effectiveness and scalability of the approach. 
Several open questions remain in this area, particularly related to more realistic battery models, multi-area dispatch problems or bidirectional power transfers.

\appendices

\section{Proof of Lemma \ref{keylemma} }
\label{bappendix}
We prove the lemma by separately considering all the cases involved in the policy definition.
\begin{enumerate}
	\item Let $i \in \mathcal{N}_{\tau_{k}}$, with  
	$\sum_{h \leq k} \sum_{j \in \mathcal{N}_{\tau_{h}}: t \in \mathcal{A}_j} \bar{P}_j
	\leq d$.
	Then $K_i (t,x,d) = \bar{P}_i$, moreover, $K_j(t,x,d)= \bar{P}_j$ for all $j \in \mathcal{N}_{\tau_{h}}$ with $h \leq k$. Hence:
	\begin{equation*}
	    \begin{aligned}
    \sum_{h \leq k}& \sum_{j \in \mathcal{N}_{\tau_{h}}} \bar{P}_j \\
	& = \sum_{h \leq k} \left ( \sum_{j \in \mathcal{N}_{\tau_{h}}: t \in \mathcal{A}_j} \hspace{-2mm} \bar{P}_j + \sum_{j \in \mathcal{N}_{\tau_{h}}
		: t \notin \mathcal{A}_j} \hspace{-2mm} K_j (t,x,d)  \right ) \\&\leq d +  \sum_{j 
		: t \notin \mathcal{A}_j} \hspace{-2mm} K_j (t,x,d).
	\end{aligned}
	\end{equation*}
	By definition of $\tilde{K}$ this implies 
	$\tilde{K}_i \left ( x , d +  \sum_{j 
		: t \notin \mathcal{A}_j} K_j (t,x,d) \right ) = \bar{P}_i = K_i(t,x,d)$.
	\item 	Let $ i \in \mathcal{N}_{\tau_{k}}$ with
	$ \sum_{h < k} \sum_{j \in \mathcal{N}_{\tau_{h}}: t \in \mathcal{A}_j} \bar{P}_j
	\leq d <  \sum_{h \leq k} \sum_{j \in \mathcal{N}_{\tau_{h}}: t \in \mathcal{A}_j} \bar{P}_j$.
	Then $K_j (t,x,d) = r(t) \bar{P}_j$ for all $j \in \mathcal{N}_{\tau_{k}}$, and
	$K_j (t,x,d) = \bar{P}_j$ for all $j \in \mathcal{N}_{\tau_{h}}$ with $h<k$. In addition $K_j (t,x,d) = 0$ for all $j \in \mathcal{N}_{\tau_{h}}$ with $h>k$.
	As a consequence:
	\begin{equation*}
    \begin{aligned}
	\sum_{h < k} \sum_{j \in \mathcal{N}_{\tau_{h}}} \bar{P}_j =& \sum_{h < k} \left ( \sum_{j \in \mathcal{N}_{\tau_{h}}: t \in \mathcal{A}_j} \bar{P}_j +  \sum_{j \in \mathcal{N}_{\tau_{h}}: t \notin \mathcal{A}_j} \bar{P}_j \right ) \\
	\leq& d +  \sum_{h < k} \sum_{j \in \mathcal{N}_{\tau_{h}}: t \notin \mathcal{A}_j} \bar{P}_j  \\ =&
	d +  \sum_{h < k} \sum_{j \in \mathcal{N}_{\tau_{h}}: t \notin \mathcal{A}_j} K_j (t,x,d) \\
	\leq& d + \sum_{j : t \notin \mathcal{A}_j}  K_j (t,x,d).
	\end{aligned}
	\end{equation*}
	Moreover:
	\begin{equation*}
    \begin{aligned}
	d +& \sum_{j : t \notin \mathcal{A}_j} K_j (t,x,d) \\ <&  \left ( \sum_{h \leq k} \sum_{j \in \mathcal{N}_{\tau_{h}}: t \in  \mathcal{A}_j} \hspace{-2mm}\bar{P}_j \right ) + \sum_{j : t \notin \mathcal{A}_j} K_j (t,x,d)  \\
	= & \left ( \sum_{h \leq k} \sum_{j \in \mathcal{N}_{\tau_{h}}: t \in  \mathcal{A}_j} \hspace{-2mm}\bar{P}_j \right ) + \left ( \sum_{h < k} \sum_{j \in \mathcal{N}_{\tau_{h}}: t \notin \mathcal{A}_j} \hspace{-2mm} \bar{P}_j \right )\\ &\,\, +\hspace{-2mm} \sum_{j \in \mathcal{N}_{\tau_{k}}: t \notin \mathcal{A}_j} \hspace{-2mm} r(t) \bar{P}_j   \\
	= &  \left ( \sum_{h < k} \sum_{j \in \mathcal{N}_{\tau_{h}}} \bar{P}_j \right )
	+ \hspace{-2mm} \sum_{j \in \mathcal{N}_{\tau_{k}}: t \in \mathcal{A}_j }  \hspace{-2mm}\bar{P}_j  +\hspace{-2mm} \sum_{j \in \mathcal{N}_{\tau_{k}}: t \notin \mathcal{A}_j} \hspace{-2mm} r(t) \bar{P}_j \\ \leq & \left ( \sum_{h \leq k} \sum_{j \in \mathcal{N}_{\tau_{h}}} \bar{P}_j \right )
	\end{aligned}
	\end{equation*}
	Hence, provided we can show $r(t)= \tilde{r} (t)$, we see that, $\tilde{K}_i \left ( x , d +  \sum_{j: t \notin \mathcal{A}_j} K_j (t,x,d) \right ) = \tilde{r}(t) \bar{P}_i = r(t) \bar{P}_i= K_i(t,x,d)$.

	To complete the proof of this case, notice that:
	\begin{equation*}
	\begin{aligned}
	r(t)& \left ( \sum_{j \in \mathcal{N}_{\tau_{k}}} \bar{P}_j \right ) \\
	& = r(t) \sum_{j \in \mathcal{N}_{\tau_{k}}: t \in \mathcal{A}_j} \hspace{-2mm}\bar{P}_j +\hspace{-2mm} \sum_{j \in \mathcal{N}_{\tau_{k}}: t \notin \mathcal{A}_j }\hspace{-2mm} r(t)\bar{P}_j \\
	& = r(t) \left (   \sum_{j \in \mathcal{N}_{\tau_{k}}: t \in \mathcal{A}_j} \hspace{-2mm} \bar{P}_j \right ) +\hspace{-2mm} \sum_{j \in
		\mathcal{N}_{\tau_{k}}: t \notin \mathcal{A}_j } \hspace{-2mm} K_j (t,x,d) \\
	& = d - \left ( \sum_{h < k} \sum_{j \in \mathcal{N}_{\tau_{h}}: t \in \mathcal{A}_j } \hspace{-2mm}\bar{P}_j \right ) 
	+ \hspace{-2mm}  \sum_{j \in
		\mathcal{N}_{\tau_{k}}: t \notin \mathcal{A}_j } \hspace{-2mm} K_j (t,x,d) \\
	& =  d - \left ( \sum_{h < k} \sum_{j \in \mathcal{N}_{\tau_{h}}} \bar{P}_j      \right ) + \sum_{j: t \notin \mathcal{A}_j} K_j (t,x,d) \\
	& = \tilde{r} (t) \sum_{j \in \mathcal{N}_{\tau_{k}} } \bar{P}_j
	\end{aligned}
	\end{equation*}
	\item Finally, when $i \in \mathcal{N}_{\tau_{k}}$, with  
	$d < \sum_{h < k} \sum_{j \in \mathcal{N}_{\tau_{h}}: t \in \mathcal{A}_j} \bar{P}_j$,
	we have $K_j (t,x,d)=0$ for all $j \in \mathcal{N}_{\tau_{h}}$ for all $h \geq k$. As a consequence:
	\begin{equation*}
	\begin{aligned}
    d + & \hspace{-2mm}\sum_{j : t \notin \mathcal{A}_j} K_j (t,x,d) =   d \hspace{-1mm} + \hspace{-1mm} \left ( \sum_{h<k}  \sum_{j \in \mathcal{N}_{\tau_{h}}: t \notin \mathcal{A}_j} \hspace{-2mm}K_j (t,x,d)  \right ) \\
	<& \left ( \sum_{h < k} \sum_{j \in \mathcal{N}_{\tau_{h}}: t \in \mathcal{A}_j} \hspace{-2mm} \bar{P}_j \right ) \hspace{-1mm} + \hspace{-1mm} \left (  \sum_{h<k}  \sum_{j \in \mathcal{N}_{\tau_{h}}: t \notin \mathcal{A}_j} \hspace{-2mm}K_j (t,x,d)  \right ) \\ \leq&  \left ( \sum_{h < k} \sum_{j \in \mathcal{N}_{\tau_{h}}: t \in \mathcal{A}_j} \hspace{-2mm}\bar{P}_j \right ) \hspace{-1mm} + \hspace{-1mm} \left ( \sum_{h<k}  \sum_{j \in \mathcal{N}_{\tau_{h}}: t \notin \mathcal{A}_j} \hspace{-2mm}\bar{P}_j \right )\\ 
	=&  \sum_{h < k} \sum_{j \in \mathcal{N}_{\tau_{h}}} \bar{P}_j.
	\end{aligned}
	\end{equation*}
	This proves that:
	$\tilde{K}_i \left ( x , d +  \sum_{j 
		: t \notin \mathcal{A}_j} K_j (t,x,d) \right ) = 0 = K_i(t,x,d)$.
\end{enumerate}
This concludes the proof of the Lemma.

\comment{
\section{\textcolor{blue}{Proof of complemented energy conditions}}
Let $d: \mathcal{T} \rightarrow \mathbb{R}_{\geq 0}$ be a feasible power profile with respect to availability sets $\mathcal{A}_j$ ($j \in \mathcal{N}$) and with initial time-to-discharge distribution $x(0)$. Let $\lambda$ be arbitrary in $[0,1]^N$, $\hat{x}^{\lambda} (0)$ be defined according to (\ref{hatdef}).
Then, there exist $u_j(\cdot): \mathcal{T} \rightarrow [0, \bar{P}_j]$, such that: 
\begin{enumerate}
	\item $\sum_{j \in \mathcal{N}} u_j (t) = d(t)$;
	\item $u_j(t) =0$ for all $j$ and all $t \notin \mathcal{A}_j$;
	\item $\int_{\mathcal{T}} u_j(t) dt \leq x_j (0) \bar{P}_j$.
\end{enumerate}
Consider the following auxiliary input signals:
\[   \tilde{u}_j (t) = \left \{ \begin{array}{rl} u_j(t) & \textrm{if } t \in \mathcal{A}_j \\ \lambda_j \bar{P}_j & \textrm{if } t \notin \mathcal{A}_j. \end{array} \right. \] 
We claim that $\tilde{u}_j$ are a feasible dispatch policy for demand profile $d_{\lambda} (t)$ (without availability restrictions) and for initial time-to-discharge distribution $\hat{x}^{\lambda}(0)$.
Notice that signals $\tilde{u}_j$ dispatch the adequate amount of power:
\begin{equation*}
\begin{aligned}
\sum_{j \in \mathcal{N}} \tilde{u}_j (t) =& \sum_{j : t \in \mathcal{A}_j } u_j (t)
+ \sum_{j: t \notin \mathcal{A}_j} \lambda_j \bar{P}_j \\ =&\,\, d(t) +  \sum_{j: t \notin \mathcal{A}_j} \lambda_j \bar{P}_j = d_{\lambda} (t). \end{aligned}
\end{equation*}
Moreover, by definition they fulfill the power constraint: $0 \leq \tilde{u}_j (t) \leq \bar{P}_j$. Finally, the energy requirements of individual agents are also fulfilled since:
\begin{equation*}
    \begin{aligned}
\int_{\mathcal{T} } \tilde{u}_j (t) dt =&  \int_{\mathcal{A}_j} u_j (t) dt
+ \int_{\mathcal{T} \backslash \mathcal{A}_j } \lambda_j \bar{P}_j \\\leq&
x_j(0) \bar{P}_j + \mu( \mathcal{T} \backslash{A}_j ) \lambda_j \bar{P}_j
= \hat{x}^{\lambda}_j (0) \bar{P}_j.
\end{aligned}
\end{equation*}
This completes the proof of the necessary condition. }

\section{Proof of Theorem \ref{necandsufficient}}
We show first necessity of conditions (\ref{fundamentallimitation}). Let $d(\cdot)$ be a feasible demand signal. Then there exists policies $u_j (\cdot)$ such that:
\begin{enumerate}
	\item Total demand constraint: $\sum_j u_j(t)=d(t)$ for all $t \geq 0$;
	\item Power constraint: $0 \leq u_j (t) \leq \bar{P}_j$, for all $t \geq 0$;
	\item Availability constraints: $u_j(t) = 0$ for all $t \notin \mathcal{A}_j$;
	\item Energy constraint: $\int_0^{+\infty} u_j(t) \, dt \leq \bar{P}_j x_j(0)$.
\end{enumerate}
Hence, for every $\mathcal{W} \subset [0,+\infty)$ we see the following:
\[ \int_{\mathcal{W}} d(t) \, dt = \int_{\mathcal{W}} \sum_{j \in \mathcal{N} }
 u_j(t)\, dt =   \sum_{j \in \mathcal{N} } \int_{\mathcal{W}}
 	u_j(t)\, dt \]
 	\[ \qquad =  \sum_{j \in \mathcal{N} } \int_{\mathcal{W} \cap \mathcal{A}_j }
 	u_j(t)\, dt  \leq  \sum_{j \in \mathcal{N} } \min \{ \mu( \mathcal{W} \cap \mathcal{A}_j ), x_j(0) \} \bar{P}_j. \]  
 	This completes the necessity proof. 
 	
 	Conversely, let $d(\cdot)$ be unfeasible. Consider the associated map $\Lambda$, as
 	defined in equation (\ref{Lambdamap}) and (\ref{outsideenergy}). Let $\bar{\lambda}$ be a fixed point of the map (which always exists by Brower's fixed point Theorem) and $\hat{x}^{\bar{\lambda}} (0)$ the associated initial condition. Since $d$ is unfeasible, the set defined below is non-empty:
 	\[  \mathcal{N}_e = \{ j \in \mathcal{N}: \tilde{\varphi}_j (\sup \mathcal{T}, \hat{x}^{\bar{\lambda}}(0), \tilde{d}(\cdot )) <0 \}, \]
 	viz. of batteries which have negative energy at the end of the considered time-horizon, when the signal $\tilde{d}$ is dispatched.
 	Equivalently, by virtue of Lemma \ref{integratedlemma}, $\mathcal{N}_e$ can be expressed as
 		$\mathcal{N}_e = \{ j \in \mathcal{N}: \varphi_j (\sup \mathcal{T}, \hat{x}^{\bar{\lambda}}(0), d(\cdot )) <0 \}$.
 		
  Let $u_j(t)$ be defined as:
  \[ u_j(t) = \left \{ \begin{array}{rl} K ( t, \varphi(t,\hat{x}^{\bar{\lambda}} (0), d(\cdot), d(t)) & \textrm{if } t \in \mathcal{A}_j \\
  0 & \textrm{if } t \notin \mathcal{A}_j \end{array} \right .  \]
 The set $\mathcal{N}_e$ can equivalently be expressed as:
 $\mathcal{N}_e = \{  j \in \mathcal{N}: \int_{\mathcal{T}} u_j (t) dt > \bar{P}_j x_j(0) \}$,
 thanks to the fact that $\bar{\lambda}$ is a fixed point of $\Lambda$. 
The following implication is a consequence of the order-preserving property of the
maps $\tilde{\varphi}$ and $\varphi$ and of the definition of $K$:
\[   \exists \, i \in \mathcal{N}_e: u_i(t)>0 \; \Rightarrow u_j (t)= \bar{P}_j \; \forall \, j \notin \mathcal{N}_e: t \in \mathcal{A}_j  \]  	
Let $\mathcal{W}$ denote the set $\mathcal{W} = \bigcup_{j \in \mathcal{N}_e} \textrm{supp} (u_j)$.
Hence we might proceed to the following manipulations:
\begin{equation*}
    \begin{aligned}
\int_{\mathcal{W}}  d(t) \, dt =&
\int_{\mathcal{W}} \sum_{j \in \mathcal{N}} u_j(t) \, dt \\ =& \sum_{j \notin \mathcal{N}_e }
	\int_{\mathcal{W}} u_j (t) \, dt + \sum_{j \in \mathcal{N}_e } \int_{\mathcal{W}} u_j (t) \, dt \\
=& \sum_{j \notin \mathcal{N}_e }
\int_{\mathcal{W} \cap \mathcal{A}_j } u_j (t) \, dt + \sum_{j \in \mathcal{N}_e}
\int_{\mathcal{T}} u_j(t) \, dt \\
= & \sum_{j \notin \mathcal{N}_e }
\int_{\mathcal{W} \cap \mathcal{A}_j } \bar{P}_j \, dt +  \sum_{j \in \mathcal{N}_e}
\int_{\mathcal{T}} u_j(t) \, dt  \\
>&  \sum_{j \notin \mathcal{N}_e }  \mu( \mathcal{W} \cap \mathcal{A}_j) 
\bar{P}_j +  \sum_{j \in \mathcal{N}_e } \bar{P}_j x_j(0)\\ \geq&
\sum_{j \in \mathcal{N} } \min \{ \mu( \mathcal{W} \cap \mathcal{A}_j ),x_j(0)   \} \bar{P}_j.
\end{aligned}
\end{equation*}
This shows that (\ref{fundamentallimitation}) is violated and concludes the proof.

\section{Proof of Theorem \ref{timefailuremax}}
We first show by induction that $\tau^{*} \in [0, \tau_k]$ for all $k \in \mathbb{N}$. The claim is trivially true for $k=0$, given the initialization $\tau_0$.
Assume next that $\tau^{*} \in [0, \tau_k]$. We will show that $\tau^{*} \in
[0, \tau_{k+1}]$. Let $\bar{\lambda}_k$ be the fixed point of $\Lambda$ at the $k$-th iteration of the algorithm. Consider $\tilde{x}_k (0)$, the corresponding value of time-to-discharge with augmented energy proportional to $\bar{\lambda}_k$. We denote by $\tilde{x}_k(\cdot)$ the solution corresponding to a GGDDF policy for the fleet without availability constraints. It is known that this solution maximizes time-to-failure, viz.
\[  \begin{array}{rl} \mathcal{T}_f (\tilde{x}_k ) = 
 \max_{\tilde{u}(\cdot),\tilde{x}(\cdot)}& \mathcal{T}_f (\tilde{x}(\cdot)) \\
\textrm{s.t.}\quad \tilde{x}(0) &= \tilde{x}_k (0) \\
\dot{\tilde{x}}(t) &= - \Omega \tilde{u}(t), \quad \forall \, t \in [0,\tau_k] \\
0 \leq \tilde{u}_j(t) & \leq \bar{P}_j, \quad \forall \, t \in [0, \tau_k], \forall \, j \in \mathcal{N} \\
\tilde{d}_k(t) & =  \sum_{j \in \mathcal{N}} \tilde{u}_j(t). 
\end{array}
\]
Moreover, this also equals the maximum time-to-failure for demand signal $\tilde{d}$ over a restricted class of input policies, viz:
\[  \begin{array}{rl} \mathcal{T}_f (\tilde{x}_k )  =& 
 \max_{u(\cdot),\tilde{x}(\cdot)} \mathcal{T}_f (\tilde{x}(\cdot)) \\
\textrm{s.t.} \quad \tilde{x}(0) =& \tilde{x}_k (0) \\
\tilde{u}_j(t) = & \left \{ \begin{array}{rl} u_j(t) & t \in \mathcal{A}_j \\
K_j(t,\varphi(t,\tilde{x}_k(0),\tilde{d}_k),\tilde{d}_k(t)) & t \notin \mathcal{A}_j \end{array} \right . \\
\dot{\tilde{x}}(t) =& - \Omega \tilde{u}(t), \quad \forall \, t \in [0,\tau_k] \\
0 \leq& u_j(t) \leq \bar{P}_j, \quad \forall \, t \in [0, \tau_k], \forall \, j \in \mathcal{N} \\
\tilde{d}_k(t) = & \sum_{j \in \mathcal{N}} \tilde{u}_j(t). 
\end{array}
\]
Since the energy delivered by each device outside $\mathcal{A}_j$ in the above optimisation is equal to the additional energy provided at time $0$, we see that
$\tilde{x} (t) \geq x(t)$ where $x_j$ denotes the state evolution for input $u_j$ (which mathces $\tilde{u}_j$ in $\mathcal{A}_j$ but is zero otherwise).
Hence, the following inequality holds:
\begin{equation}
\label{maxi2}    
  \begin{array}{rl} \mathcal{T}_f (\tilde{x}_k ) & \geq 
 \max_{u(\cdot),x(\cdot)} \mathcal{T}_f (x(\cdot)) \\
\textrm{s.t.}\quad x(0) &= \Omega E(0) \\
\dot{x}(t) &= - \Omega u(t), \quad \forall \, t \in [0,\tau_k] \\
0 \leq u_j(t) & \leq \bar{P}_j, \quad \forall \, t \in [0, \tau_k], \forall \, j \in \mathcal{N} \\
u_j(t) & =0 \quad \forall \, t \notin \mathcal{A}_j \; \forall \; j \in \mathcal{N}\\
d(t) &=  \sum_{j \in \mathcal{N}} u_j(t). 
\end{array}
\end{equation}
The maximisation problem in (\ref{maxi2}), however, yields $\tau^*$ due to the induction hypotesis: $\tau^* \in [0, \tau_k]$. Hence,
$\tau_{k+1} = \mathcal{T}_f ( \tilde{x}_k(\cdot) ) \geq \tau^*$. Notice that $\tau_k$ is, by construction, a non-increasing and lower-bounded sequence. Hence it admits a limit $\bar{\tau}$. By the previous inequality, we see that 
$ \bar{\tau} = \lim_{k \rightarrow + \infty} \tau_k \geq \tau^*$. We need to show that equality holds. 

By definition of $\tilde{x}_k$, we have
$\tilde{x}_k (t) \geq 0 \; \forall \, t \in [0,\tau_{k+1}]$.
Let $k_n$ be any divergent sequence such that $\tilde{x}_{k_n}(0)$ converges to
some initial condition $\tilde{x}_{\infty}(0)$ as $n \rightarrow + \infty$.
Accordingly $\tilde{x}_{k_n}(\cdot)$ will converge to signal
$\varphi(t,\tilde{x}_{\infty} (0), d(\cdot))$ and $\tilde{d}_k(\cdot)$ will converge to some limit $\tilde{d}_{\infty} (t)$ fulfilling definition
$\tilde{d}_{\infty} (t) = d(t) + \sum_{j: t  \notin \mathcal{A}_j} K_j (t,\varphi(t, \tilde{x}_{\infty} (0), d(\cdot)), d(t))$. 
In particular, taking limits along subsequence $k_n$ yields $\tilde{x}_{\infty} (t) \geq 0$ for all $t \in [0, \bar{\tau}]$.
Hence $\tilde{d}_{\infty}$ is feasible over $[0, \bar{\tau}]$. Clearly $
\Lambda( \bar{\lambda}_{\infty} ) = \bar{\lambda}_{\infty}$ and therefore, by Theorem \ref{main}, $d$ is feasible for the fleet with partial availability constraints over the interval $[0,\bar{\tau}]$. Hence $\tau^{\star} \geq \bar{\tau}$. Since we already proved the opposite inequality, $\tau^* = \bar{\tau}$ which completes the proof.



\ifCLASSOPTIONcaptionsoff
  \newpage
\fi


\begin{thebibliography}{99}

    \bibitem{weitzel} T. Weitzel, and C. H. Glock, ``Energy management for stationary electric energy storage systems: A systematic literature review,'' \emph{European Journal of Operational Research}, Vol. 264, N.2, pp. 582–606, 2018.
    
    \bibitem{dynprog} R. Sioshansi, S. H. Madaeni, and P. Denholm, ``A dynamic programming approach to estimate the capacity value of energy storage,'' \emph{IEEE Transactions on Power Systems}, Vol. 29, N.1, pp. 395-403, 2014.
    
    \bibitem{appino} R.R. Appino, V. Hagenmeyer and T. Faulwasser, 2021. Towards optimality preserving aggregation for scheduling distributed energy resources. IEEE Transactions on Control of Network Systems, 8(3), pp.1477-1488.

	
	\bibitem{zhuzhang} D. Zhu, and Y. A. Zhang, ``Optimal coordinated control of multiple battery energy storage systems for primary frequency regulation,'' \emph{IEEE Transactions on Power Systems}, Vol. 34, N.1, pp. 555–565, 2019.

	\bibitem{ETA-pscc} M. {Evans}, S. H. {Tindemans} and D. {Angeli}, ``Robustly Maximal Utilisation of Energy-Constrained Distributed Resources,'' 2018 Power Systems Computation Conference (PSCC), pp.1-7, 2018.
	
	\bibitem{ETA-tops} M. P. {Evans}, S. H. {Tindemans} and D. {Angeli}, ``Minimizing Unserved Energy Using Heterogeneous Storage Units,'' IEEE Transactions on Power Systems, Vol. 34, N.5, pp. 3647-3656, 2019.
	
	\bibitem{EAST} M. P. Evans, D. Angeli, G. Strbac, and S. H. Tindemans, ``Chance-constrained ancillary service specification for heterogeneous storage
	devices,'' 2019 IEEE PES Innovative Smart Grid Technologies Europe (ISGT-Europe), pp. 1-5, 2019.
	
	\bibitem{zachary} J. Cruise and S. Zachary, ``Optimal scheduling of energy storage resources,'', arxiv, 2019.	
	
	\bibitem{ETA-tosg} M. P. {Evans}, S. H. {Tindemans} and D. {Angeli}, ``A Graphical Measure of Aggregate Flexibility for Energy-Constrained Distributed Resources,'' IEEE Transactions on Smart Grid, Vol. 11, N.1, pp. 106-117, 2020.
	
	\bibitem{nash} Nash P. and R. Weber, ``A simple optimizing model for reservoir control,'', Technical report, University of Cambridge, Cambridge, 1978.
	
	\bibitem{phtr} S. Zachary, S. Tindemans, M. Evans,J. Cruise and D. Angeli, ``Scheduling of Energy Storage,'' \emph{Philosophical Transactions of the Royal Society}, 2021.
	
	\bibitem{cdcbatteries} D. Angeli, Z. Dong and G. Strbac, ``Dispatch policies for heterogeneous storage devices with availability windows'', submitted to \emph{IEEE CDC 2022}.
	
	\bibitem{michaelphd} M. P. Evans, \emph{Characterising and Maximising Aggregate Flexibility of Heterogeneous Energy Storage Units}, PhD Thesis, Imperial College London, 2019.
	
	\bibitem{monotoneangeli} D. Angeli, and E. D. Sontag, ``Monotone control systems,'' \emph{IEEE Transactions on Automatic Control}, Vol. 48, N. 10, pp. 1684-1698, 2003, \texttt{doi: 10.1109/TAC.2003.817920}.
	
	\bibitem{translationinvariance} D. Angeli and E.D. Sontag, ``Translation-invariant monotone systems, and a global convergence result for enzymatic futile cycles,'' \emph{Nonlinear Analysis: Real World Applications}, Vol. 9, pp. 128 – 140, 2008.
	
	\bibitem{topology} Glen Bredon, \emph{Topology and Geometry}, \emph{Graduate Texts in Mathematics}, Vol. 139, Springer, 1993. 
	
	
	
	
	
	
	\bibitem{alinia2020online} Alinia, Bahram, Mohammad H. Hajiesmaili, Zachary J. Lee, Noel Crespi, and Enrique Mallada. ``Online EV scheduling algorithms for adaptive charging networks with global peak constraints.'' \emph{IEEE Transactions on Sustainable Computing} (2020).
	
	
	\bibitem{Ma2011decentralized} Ma, Zhongjing, Duncan S. Callaway, and Ian A. Hiskens. ``Decentralized charging control of large populations of plug-in electric vehicles.'' \emph{IEEE Transactions on control systems technology} 21, no. 1 (2011): 67-78.
	
	\bibitem{Xin2013a} Xin, Huanhai, Meidan Zhang, John Seuss, Zhen Wang, and Deqiang Gan. ``A real-time power allocation algorithm and its communication optimization for geographically dispersed energy storage systems.'' \emph{IEEE Transactions on Power Systems} 28, no. 4 (2013): 4732-4741.
	
	\bibitem{Chen2016state} Chen, Yue and Bu{\v{s}}i{\'c}, Ana and Meyn, Sean P. ``State estimation for the individual and the population in mean field control with application to demand dispatch.'' \emph{IEEE Transactions on Automatic Control} 62, no. 3 (2016): 1138-1149.
	
	\bibitem{Fortenbacher2015optimal} Fortenbacher, Philipp, G{\"o}ran Andersson, and Johanna L. Mathieu. ``Optimal real-time control of multiple battery sets for power system applications.'' In 2015 \emph{IEEE Eindhoven PowerTech}, pp. 1-6. IEEE, 2015.
	
	\bibitem{Liu2013planning} Liu, Jianzhe, Sen Li, Wei Zhang, Johanna L. Mathieu, and Giorgio Rizzoni. ``Planning and control of electric vehicles using dynamic energy capacity models.'' In 52nd \emph{IEEE Conference on Decision and Control}, pp. 379-384. IEEE, 2013.
	
	
	\bibitem{Jang2021large} Jang, Sunho, Necmiye Ozay, and Johanna Mathieu. ``Large-Scale Invariant Sets for Safe Coordination of Thermostatic Loads.'' (2021).
	
	
	\bibitem{Espinosa2020a} Espinosa, Luis A. Duffaut, and Mads Almassalkhi. ``A packetized energy management macromodel with quality of service guarantees for demand-side resources.'' \emph{IEEE Transactions on Power Systems} 35, no. 5 (2020): 3660-3670.
	
	\bibitem{Dall2017optimal} Dall’Anese, Emiliano, Swaroop S. Guggilam, Andrea Simonetto, Yu Christine Chen, and Sairaj V. Dhople. ``Optimal regulation of virtual power plants.'' \emph{IEEE Transactions on Power Systems} 33, no. 2 (2017): 1868-1881.
	
	
	\bibitem{Hao2014aggregate} Hao, He, Borhan M. Sanandaji, Kameshwar Poolla, and Tyrone L. Vincent. ``Aggregate flexibility of thermostatically controlled loads.'' \emph{IEEE Transactions on Power Systems} 30, no. 1 (2014): 189-198.
	
	
	\bibitem{Filippov} Filippov, Aleksei Fedorovich. Differential equations with discontinuous righthand sides: control systems. Vol. 18. \emph{Springer Science \& Business Media}, 2013.
	

\end{thebibliography}
\end{document}